\newtheorem{thm}{Theorem}[section]
\newtheorem{prop}[thm]{Proposition}
\newtheorem{defi}[thm]{Definition}
\theoremstyle{remark}
\newtheorem*{remark}{Remark}
\theoremstyle{definition}
\newtheorem{example}[thm]{Example}
\numberwithin{equation}{section}
\def\ZZ{\mathbb Z}
\def\FF {\mathbb F}
\def\CC{\mathbb C}
\def\QQ{\mathbb Q}
\def\RR{\mathbb R}
\def\cB{\mathcal B}
\def\sign{\mathrm{sgn}}
\def\Sign{\mathrm{\mathbf{Sgn}}}
\def\sS{\mathcal S}
\def\triv{\mathbf{1}}
\def\conf{\mathrm{Conf}}
\def\pconf{\mathrm{PConf}}
\def\PP{\mathbb P}
\def\cC{\mathcal C}
\def\im{\mathrm{Im}}
\def\part{\mathrm{Par}}
\def\sA{\mathcal{A}}
\def\sM{\mathcal{M}}
\def\Ind{\mathrm{Ind}}
\def\w{t}
\def\tr{\mathrm{Trace}}
\begin{document}



\title{Polynomial Splitting Measures and Cohomology of the Pure Braid Group}

\author{Trevor Hyde}
\address{Dept. of Mathematics\\
University of Michigan \\
Ann Arbor, MI 48109-1043\\
}
\email{tghyde@umich.edu}

\author{Jeffrey C.  Lagarias}
\address{Dept. of Mathematics\\
University of Michigan \\
Ann Arbor, MI 48109-1043\\
}
\email{lagarias@umich.edu}

\subjclass{Primary 11R09; Secondary 11R32, 12E20, 12E25}

\thanks{Work of the  second author was partially supported by NSF grant DMS-1401224.}

\date{February 24, 2017}
 
\begin{abstract}
We study for each $n$  a one-parameter family of complex-valued measures
on the symmetric group $S_n$, which interpolate the probability of a monic, degree $n$, square-free polynomial in $\FF_q[x]$ having a given factorization type.
For a fixed factorization type, indexed by a partition $\lambda$ of $n$, the measure is
known to be  a Laurent polynomial.
We express the coefficients of this polynomial in terms of characters associated to 
$S_n$-subrepresentations of the cohomology of the pure braid group $H^{\bullet}(P_n, \QQ)$.
We deduce that the splitting measures for all parameter values $z= -\frac{1}{m}$ \big(resp. $z= \frac{1}{m}$\big), after rescaling, are characters
of $S_n$-representations (resp. virtual $S_n$-representations.)
\end{abstract}

\maketitle


\section{Introduction}\label{sec1}

The purpose of this paper is to study for each $n \ge 1$ a one-parameter family of complex-valued measures on
the symmetric group $S_n$ arising from a problem in number theory, and to exhibit  
an explicit representation-theoretic connection between these  measures and  the characters of the natural $S_n$-action
on the rational cohomology of the pure braid group $P_n$.

This family of  measures, denoted $\nu_{n, z}^{\ast}$,
was introduced by the second author and B. Weiss in \cite{Lagarias-W:2015},
where they were called {\em $z$-splitting measures}, with parameter $z$.
The  measures interpolate from  prime power values $z=q$ the probability of a monic, degree $n$, square-free polynomial 
in $\FF_q[x]$ having a given factorization type.
Square-free factorization types are indexed by partitions $\lambda$ of  $n$ specifying the degrees of the irreducible factors.
Each partition $\lambda$ of $n$ corresponds to a conjugacy class $C_{\lambda}$ of the symmetric group $S_n$;
distributing the probability of a factorization of type $\lambda$ equally across the elements of $C_{\lambda}$ defines a
probability measure  on $S_n$.
A key property of the resulting  probabilities   is that for a fixed partition $\lambda$, their values are described by a rational function 
 in the size of the field $\FF_q$ as $q$ varies.  This property permits  interpolation  from $q$ to a  parameter $z \in \PP^{1}(\CC)$
 on the Riemann sphere, to obtain a family of complex-valued measures $\nu_{n, z}^{\ast}$ on $S_n$ given in  Definition \ref{de23} below.

On the number theory side, these  measures connect with problems on the splitting of ideals in $S_n$-number fields, which are degree $n$ number fields
formed by adjoining a root of a degree $n$ polynomial over $\ZZ[x]$ whose splitting field has Galois group $S_n$.
The paper  \cite[Theorem 2.6] {Lagarias-W:2015} observed that for  primes $p < n$ these measures vanish on certain conjugacy classes,
corresponding to the phenomenon of essential discriminant divisors of polynomials having Galois group $S_n$, first noted by Dedekind \cite{Dedekind:1878}
in 1878.  These measures converge to the uniform measure on the symmetric group as $z= p \to \infty$, and in this limit agree with
a conjecture of Bhargava \cite[Conjecture 1.3]{Bhargava:2007} on the distribution of splitting types  of the prime $p$ in $S_n$-extensions of discriminant $|D| \le B$
as the bound $B \to \infty$, conditioned on $(D, p)=1$.

The second author subsequently studied these measures interpolated
at the special value $z = 1$, viewed as representing splitting probabilities for polynomials 
over the (hypothetical)  ``field with one element $\FF_1$'' \cite{Lagarias:2016}.
These measures, called {\em $1$-splitting measures}, turn out to be
  signed measures for all $n \ge 3$.
They are 
supported on a small set of conjugacy classes, the  Springer regular elements of $S_n$ which are those conjugacy
classes $C_{\lambda}$ for which $\lambda$ has a rectangular Young diagram or a rectangle plus a single box.
Treated as class functions on $S_n$, rather than as measures, they 
were found  to have a representation-theoretic interpretation: 
after rescaling by $n!$, the 1-splitting measures are virtual characters of $S_n$ corresponding to explicitly determined representations. 
As $n$ varies, their values on conjugacy classes were observed to have arithmetic properties compatible
with the multiplicative structure of $n$; letting $n = \prod_{p} p ^{e_p}$ be the prime factorization
of $n$,   the value of the  measure on each conjugacy class factors 
as a product of  
values on  classes
of  smaller symmetric groups $S_{p^{e_p}}$.
That paper also showed the rescaled $z$-splitting  measures at $z=-1$
 have a related representation-theoretic interpretation.

In this paper we extend the representation-theoretic interpretation to the entire family of $z$-splitting measures and
 relate it to the cohomology of the pure braid group.
Our starting point is the observation made in \cite[Lemma 2.5]{Lagarias:2016}
 that for a fixed conjugacy class 
the $z$-splitting measures
are Laurent polynomials in $z$.
They have  
degree at most $n-1$, so may  be written
\[
	\nu_{n, z}^{\ast} (C_{\lambda}) = \sum_{k=0}^{n-1}  \alpha_n^k(C_{\lambda})\big(\tfrac{1}{z}\big)^k,
\]
with rational coefficients $\alpha_n^k(C_{\lambda})$, where $\lambda$ is a partition of $n$.
We call the $\alpha_n^k(C_{\lambda})$ {\em splitting measure coefficients.}
A main observation  of this paper is that  
 each splitting measure coefficient $\alpha_n^k(C_{\lambda})$, viewed as a function of $\lambda$,
 is a rescaled character $\chi_n^k$ of a certain $S_n$-subrepresentation $A_n^k$ of the cohomology of the pure braid group $H^k(P_n, \QQ)$.
{ The pure braid groups $P_n$ and their cohomology, along with  the subrepresentations $A_n^k$,
are  defined
and discussed in Section \ref{sec:braid-group}.}
{In Section \ref{sec:43a} we  identify the $S_n$-representation  $A_n^k$ with the cohomology of a complex manifold $Y_n$ carrying an $S_n$-action. 
We deduce as a consequence  a topological interpretation of the $1$-splitting measure  as a rescaled version of the $S_n$-equivariant Euler characteristic of $Y_n$.}
We also deduce that the rescaled $z$-splitting measure is a character of $S_n$ 
at $z = -\frac{1}{m}$ and  is a virtual character of $S_n$ at $z = \frac{1}{m}$,  for all integers $m\geq 1$.

The last result extends the representation-theoretic connection of \cite{Lagarias:2016} for $z= \pm 1$ to parameters $z= \pm \frac{1}{m}$
for all $m \ge 1$.


\subsection{Results}\label{sec11}

The  {\em $z$-splitting measure} on a conjugacy class $C_{\lambda}$ of $S_n$ is the rational function of $z$
\[
	\nu_{n, z}^{\ast} (C_{\lambda}) := \frac{N_{\lambda}(z)}{ z^n - z^{n-1}},
\]
where  $N_{\lambda}(z) \in \QQ[z]$ denotes
the {\em cycle polynomial} associated to a partition
$\lambda$ describing the cycle lengths of $C_\lambda$.
Given  $\lambda= \big(1^{m_1(\lambda)} 2^{m_2(\lambda)} \cdots n^{m_n(\lambda)} \big)$,
the  associated cycle polynomial is 
\begin{equation}\label{eq:11}
	N_{\lambda}(z) :=  \prod_{j\geq 1} {{M_j(z)}\choose{m_j(\lambda)}},
\end{equation}
where $M_j(z)$ denotes the $j$th necklace polynomial.   
The  \emph{necklace polynomial} $M_j(z)$ of order $j$ is given by 
\[
	M_j(z) := \frac{1}{j}  \sum_{d \mid j} \mu(d) z^{j/d}.
\]
where  $\mu(d)$ is the M\"{o}bius function. 

To avoid confusion we make a  remark on values of measures.
Given a class function $f$ on $S_n$ we write $f(C_{\lambda})$ to mean the sum of the values of $f$ on $C_\lambda$,
and write $f(\lambda)$ to mean  the value $f(g)$
taken at one element $g \in C_{\lambda}$; the latter notation is standard for characters.
Thus $\nu_{n,z}^{\ast}(C_{\lambda}) = |C_\lambda| \nu_{n, z}^{\ast}(\lambda)$.

In Section \ref{sec:rep-interpretation} we express the coefficients of 
the family of cycle polynomials $N_\lambda(z)$  in terms of characters of the cohomology of the pure braid group 
$P_n$ viewed as an $S_n$-representation. 
%
%
\begin{thm}[Character interpretation of cycle polynomial coefficients]\label{thm:main-0}
Let $\lambda$ be a partition of $n$
and $N_\lambda(z)$ be a cycle polynomial. 
Then
\[
    N_\lambda(z) = \frac{|C_{\lambda}|}{n!}\sum_{k=0}^n{(-1)^k h_n^k(\lambda) z^{n-k}}.
\]
where $h_n^k$ is the character of the $k$th cohomology of the pure braid group $H^k(P_n,\QQ)$, viewed as an $S_n$-representation.
\end{thm}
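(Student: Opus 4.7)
Both sides of the proposed identity are polynomials in $z$, so it suffices to verify it at the infinitely many specializations $z=q$ with $q$ a prime power. For each such $q$ and each $g\in S_n$ of cycle type $\lambda$, my strategy is to reinterpret both sides as counts of $\overline{\FF_q}$-points of the ordered configuration space $X:=\pconf_n(\AA^1)=\{(x_1,\dots,x_n):x_i\neq x_j\}$ that are fixed by the endomorphism $g\cdot\mathrm{Frob}_q$, where $g$ permutes coordinates and $\mathrm{Frob}_q$ raises each coordinate to the $q$th power.

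\textbf{Combinatorial side.} A tuple $(x_1,\dots,x_n)\in X(\overline{\FF_q})$ is $(g\cdot\mathrm{Frob}_q)$-fixed iff $x_{g(i)}=x_i^q$ for all $i$. Restricted to a single $j$-cycle of $g$, this condition forces the coordinates along that cycle to be a complete Galois orbit of length exactly $j$ over $\FF_q$, i.e.\ the full zero set of a monic $\FF_q$-irreducible polynomial of degree $j$, with a distinguished root marking the cycle's initial coordinate. Across the $m_j(\lambda)$ cycles of length $j$, these irreducible polynomials must be pairwise distinct to avoid coincidences among coordinates. Assembling the choices and using the centralizer identity $n!/|C_\lambda|=\prod_j j^{m_j(\lambda)}\,m_j(\lambda)!$ yields
\[
\bigl|X(\overline{\FF_q})^{g\cdot\mathrm{Frob}_q}\bigr|\;=\;\prod_{j\geq 1}j^{m_j(\lambda)}\,m_j(\lambda)!\,\binom{M_j(q)}{m_j(\lambda)}\;=\;\frac{n!}{|C_\lambda|}\,N_\lambda(q).
\]

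\textbf{Cohomological side.} By the Grothendieck--Lefschetz trace formula applied to the smooth $n$-dimensional $\FF_q$-scheme $X$,
\[
\bigl|X(\overline{\FF_q})^{g\cdot\mathrm{Frob}_q}\bigr|\;=\;\sum_{k}(-1)^k\,\tr\!\bigl(g\cdot\mathrm{Frob}_q\bigm| H^k_c(X_{\overline{\FF_q}},\QQ_\ell)\bigr).
\]
Since $X$ is the complement of the braid hyperplane arrangement, the Orlik--Solomon presentation of $H^\bullet(X,\QQ_\ell)$ in degree-$1$ classes of $d\log(x_i-x_j)$-type shows $H^k(X,\QQ_\ell)$ is pure of Hodge--Tate type $(k,k)$, so Frobenius acts on $H^k(X,\QQ_\ell)$ as $q^k$. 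Poincar\'e duality $H^k_c\cong H^{2n-k}(X)^\vee(-n)$ on the smooth $n$-fold $X$, combined with the vanishing $H^j(X)=0$ for $j\geq n$, then forces Frobenius on $H^k_c$ to act as $q^{k-n}$ when $k\in[n+1,2n]$, and $H^k_c$ to vanish otherwise. Since $S_n$-characters are real-valued, the trace on the dual representation agrees with $h_n^{2n-k}(g)$; reindexing by $j=2n-k$ collapses the trace sum to
\[
\sum_{j=0}^{n-1}(-1)^j\,q^{n-j}\,h_n^j(g).
\]
Equating this to the combinatorial count at every prime power $q$ and extending polynomially in $z$ then gives the theorem (the $k=n$ term in the displayed identity vanishes because $h_n^n\equiv 0$).

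\textbf{Main obstacle.} The delicate ingredient is the cohomology-side computation: one must carefully track the Tate twist through Poincar\'e duality on the open smooth variety $X$ to confirm that Frobenius contributes precisely $q^{n-k}$ in the $k$th summand. The combinatorial count, by contrast, is essentially routine once the cycle-by-cycle decomposition is set up. An alternative, more elementary route bypasses $\ell$-adic cohomology entirely by combining the equivariant Orlik--Solomon presentation of $H^\bullet(\pconf_n(\CC))$ with the equivariant M\"obius function of the partition lattice $\Pi_n$, recovering the same polynomial identity via equivariant inclusion-exclusion on $\AA^n$.
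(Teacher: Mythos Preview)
Your argument is correct and is essentially the same as the paper's, just unpacked one layer. The paper invokes the Church--Ellenberg--Farb twisted Grothendieck--Lefschetz formula for $\pconf_n$ as a black box and specializes it at the indicator character polynomial $1_\lambda$; you instead re-derive that instance directly, computing the $(g\cdot\mathrm{Frob}_q)$-fixed points on $\pconf_n(\overline{\FF_q})$ combinatorially and evaluating the trace side via purity of the Orlik--Solomon generators plus Poincar\'e duality on the smooth open variety. The underlying mechanism---Grothendieck--Lefschetz on $\pconf_n$ followed by comparison with singular cohomology of $\pconf_n(\CC)$---is identical, and your identification of the Frobenius weight bookkeeping through duality as the only delicate step is apt.
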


Theorem \ref{thm:main-0} is  a rescaled version of  a result of Lehrer \cite[Theorem 5.5]{Lehrer:1987}.
Lehrer arrived at it from his study of the Poincar\'{e} polynomials associated to the elements of a Coxeter group
acting on the complements of certain complex hyperplane arrangements. 
We arrived at it through a direct study of the cycle polynomial $N_{\lambda}(z)$
appearing in the definition of the $z$-splitting measure, relating it to representation stability
using the twisted Grothendieck-Lefschetz formula
of Church, Ellenberg, and Farb \cite[Prop. 4.1]{CEF:2014}.
We include a proof of Theorem \ref{thm:main-0} (as Theorem \ref{cycle-coeffs});
the method behind this  proof also traces back to  work of Lehrer
 \cite{Lehrer:1992}.

 At the end of Section \ref{sec:rep-interpretation} we  apply Theorem \ref{thm:main-0}
together with the formula  \eqref{eq:11} for $N_{\lambda}(z)$
to   obtain explicit expressions for various   
 characters $h_n^k$ showing number-theoretic structure, and to determine restrictions on the support
 of various $h_n^k$.

In Section \ref{sec:braid-group} we review Arnol'd's presentation of the cohomology ring
of the pure braid group.
In Section \ref{sec:N41} we use it  derive an exact
sequence determining  certain $S_n$-subrepresentations $A_n^k$ of $H^k(P_n,\QQ)$
which play the main role in our results.  These subrepresentations lead to a direct sum
decomposition $H^k(P_n, \QQ) \simeq A_n^{k-1}\oplus A_n^k$, for each $k \ge 0$.
 In Section \ref{sec:43a} we  interpret the $A_n^k$
 as the cohomology   of an $(n-1)$-dimensional complex manifold  $Y_n$ that carries an $S_n$-action. 
The manifold $Y_n$  is  the quotient of  the pure configuration space  $\pconf_n(\CC)$  of $n$ distinct 
(labeled) points in $\CC$ by a free action of  $\CC^{\times}$.
 
The  main result of this paper, given in Section \ref{sec:splitting-characters}, expresses  the $z$-splitting measures $\nu_{n, z}^{\ast}$
in terms of the characters $\chi_n^k$ of the $S_n$-representations $A_n^k$.

%
%
\begin{thm}[Character interpretation of splitting measure coefficients]\label{thm:main-1}
For each $n \ge 1$ and $0 \le k \le n-1$ there is an $S_n$-subrepresentation $A_n^k$ of $H^k(P_n,\QQ)$ (constructed explicitly in Proposition \ref{prop:exact-lemma}) 
with  character $\chi_n^k$ 
such that for each partition $\lambda$ of $n$,
\[
	\nu_{n,z}^\ast(C_\lambda) = \frac{|C_{\lambda}|}{n!} \sum_{k=0}^{n-1}\chi_n^k(\lambda)\big(-\tfrac{1}{z}\big)^k.
\]
Thus the splitting measure coefficient $\alpha_n^k(C_{\lambda})=|C_{\lambda}|  \,\alpha_n^k(\lambda)$ is given by
\[
	 \alpha_n^k(C_{\lambda})  =(-1)^k \frac{|C_{\lambda}|}{n!}  \chi_n^{k}(\lambda).
\]
\end{thm}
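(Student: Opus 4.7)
The plan is to derive Theorem \ref{thm:main-1} directly from Theorem \ref{thm:main-0} together with the direct sum decomposition $H^k(P_n, \QQ) \simeq A_n^{k-1} \oplus A_n^k$ supplied by Proposition \ref{prop:exact-lemma}. Passing to characters, this decomposition gives the identity $h_n^k = \chi_n^{k-1} + \chi_n^k$ for each $k \geq 0$, where by convention $\chi_n^{-1} = 0$ and $\chi_n^n = 0$ (the latter being forced by $H^n(P_n, \QQ) = 0$ combined with the decomposition at $k = n$).

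I would substitute $h_n^k = \chi_n^{k-1} + \chi_n^k$ into the formula from Theorem \ref{thm:main-0}, split the resulting expression into two sums, one indexed by $\chi_n^k$ and one by $\chi_n^{k-1}$, and reindex the second. Thanks to the vanishing of the boundary terms $\chi_n^{-1}$ and $\chi_n^n$, the two sums combine into
\[
    \sum_{k=0}^{n-1} (-1)^k \chi_n^k(\lambda)\bigl(z^{n-k} - z^{n-1-k}\bigr).
\]
Each term has a common factor $z^{n-1-k}(z-1)$, so the entire expression factors as
\[
    (z^n - z^{n-1}) \sum_{k=0}^{n-1} \chi_n^k(\lambda)\bigl(-\tfrac{1}{z}\bigr)^k.
\]
Substituting back into the formula of Theorem \ref{thm:main-0} and dividing by $z^n - z^{n-1}$, which is exactly the definition of $\nu_{n,z}^*(C_\lambda)$, yields the first displayed formula of the theorem. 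The formula for $\alpha_n^k(C_\lambda)$ then follows by matching coefficients of $(1/z)^k$ against the defining Laurent expansion of $\nu_{n,z}^*(C_\lambda)$ recorded in the introduction.

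The substantive work lies outside the present theorem, in Theorem \ref{thm:main-0} and in the construction of the subrepresentations $A_n^k$ via Proposition \ref{prop:exact-lemma}. Granted those ingredients, the deduction reduces to a short telescoping identity. The only subtle point is correctly arranging this telescoping at the boundary indices $k=-1$ and $k=n$, which is handled by the two vanishing conventions above; in particular the cohomological vanishing $H^n(P_n,\QQ) = 0$ is precisely what makes the factor $z^n - z^{n-1}$ come out exactly, allowing the denominator of $\nu_{n,z}^*$ to cancel cleanly and leaving a polynomial in $1/z$ of the anticipated degree $n-1$.
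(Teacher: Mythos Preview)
Your proposal is correct and follows essentially the same approach as the paper: both use Theorem~\ref{thm:main-0} together with the character identity $h_n^k = \chi_n^{k-1} + \chi_n^k$ from Proposition~\ref{prop:exact-lemma}, then perform the same telescoping computation (the paper divides by $z^n$ first and factors out $1-\tfrac{1}{z}$, while you keep powers of $z$ and factor out $z^n - z^{n-1}$, but these are the same manipulation). The boundary conventions $\chi_n^{-1}=0$ and $\chi_n^n=0$ that you highlight are exactly what the paper uses implicitly.
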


In Section \ref{sec:hyperplane2} we interpret this result in terms of 
cohomology of the manifold $Y_n$.
On setting $t = -\frac{1}{z}$, we have that  for each $g \in S_n$,
\[
	\nu_{n, z}^{\ast}(g) = \frac{1}{n!} \sum_{k=0}^{n-1} \tr(g, H^k(Y_n, \QQ)) t^k,
\]
which is a value of the  equivariant Poincar\'{e} polynomial for $Y_n$ with respect to the $S_n$-action (Theorem \ref{thm:splitting-coeffs2}).
{In particular we obtain the following  topological interpretation
of the  $1$-splitting measure, as  the   special case $t=-1$.}

%
%
\begin{thm}[Topological interpretation of $1$-splitting measure]\label{thm:cor-main-1a} 
Let $Y_n$ denote the open complex manifold  $\pconf_n(\CC)/\CC^{\times}$,
which carries an $S_n$-action under permutation of the $n$ points. 
Then the rescaled $1$-splitting measure $\nu^*_{n,1}(\cdot)$ evaluated
at   elements $g \in S_n$ 
is the equivariant Euler characteristic of  $Y_n$,
\[
\nu^*_{n,1}(g) =  \frac{1}{n!}\sum_{k=0}^{n-1}{ (-1)^k \tr(g, H^k( Y_n, \QQ))},
\]
with respect to its $S_n$-action.
\end{thm}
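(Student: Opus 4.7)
The plan is to specialize Theorem \ref{thm:main-1} to the parameter $z=1$ and then re-express the resulting $S_n$-representation-theoretic formula in topological terms, using the identification of $A_n^k$ with the rational cohomology of $Y_n$ developed in Section \ref{sec:43a}.

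First I would translate Theorem \ref{thm:main-1} from values on conjugacy classes to values on individual elements, using the relation $\nu_{n,z}^{\ast}(g) = \nu_{n,z}^{\ast}(C_\lambda)/|C_\lambda|$ for $g \in C_\lambda$ together with the fact that $\chi_n^k$, being a character, is a class function. This yields
\[
	\nu_{n,z}^{\ast}(g) \;=\; \frac{1}{n!}\sum_{k=0}^{n-1}\chi_n^k(g)\bigl(-\tfrac{1}{z}\bigr)^k.
\]
Setting $z=1$ collapses the $z$-factors into signs and produces
\[
	\nu_{n,1}^{\ast}(g) \;=\; \frac{1}{n!}\sum_{k=0}^{n-1}(-1)^k\chi_n^k(g).
\]

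Second, I would invoke the $S_n$-equivariant isomorphism $A_n^k \simeq H^k(Y_n,\QQ)$ established in Section \ref{sec:43a}. Under this isomorphism $\chi_n^k(g) = \tr\bigl(g,\, H^k(Y_n,\QQ)\bigr)$, and substituting into the display above produces exactly the equivariant Euler characteristic expression asserted in the theorem. The cutoff at $k=n-1$ is automatic, since $Y_n = \pconf_n(\CC)/\CC^{\times}$ is a smooth affine complex variety of complex dimension $n-1$ and its rational cohomology vanishes in degrees exceeding $n-1$ by Andreotti--Frankel; hence the finite alternating trace $\sum_{k=0}^{n-1}(-1)^k \tr(g, H^k(Y_n,\QQ))$ computes the full $S_n$-equivariant Euler characteristic of $Y_n$.

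The substantive content of this theorem is already packaged into Theorem \ref{thm:main-1} together with the geometric realization of $A_n^k$ as $H^k(Y_n,\QQ)$; once those are in hand, what remains is essentially bookkeeping. The only points I would be careful about are the passage between the class-function normalization on $C_\lambda$ and the element-wise normalization on $g$, and the observation that the Laurent-polynomial variable specialization $(-1/z)^k \big|_{z=1} = (-1)^k$ is precisely the sign that converts the equivariant Poincar\'e polynomial of $Y_n$ into its equivariant Euler characteristic.
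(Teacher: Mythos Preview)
Your proposal is correct and follows essentially the same route as the paper: the paper obtains Theorem~\ref{thm:cor-main-1a} as the specialization $t=-1$ (equivalently $z=1$) of Theorem~\ref{thm:splitting-coeffs2}, which in turn is just Theorem~\ref{thm:main-1} combined with the identification $A_n^k\cong H^k(Y_n,\QQ)$ from Theorem~\ref{thm:equivariant}. Your Andreotti--Frankel remark is a harmless extra justification; the paper simply inherits the cutoff at $k=n-1$ directly from Theorem~\ref{thm:main-1}.
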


In  Section \ref{sec:52} we obtain another corollary of Theorem \ref{thm:main-1}.
For  $z = -\frac{1}{m}$ with $m \ge 1$, the rescaled splitting measure
$\frac{n!}{|C_\lambda|}\nu_{n,z}^{k} (C_\lambda)$ is the character of an $S_n$-representation, and when $z= \frac{1}{m}$
it  is the character of a virtual $S_n$-representation (Theorem \ref{thm:main-2a}).

In Section \ref{sec:53} we deduce  an interesting consequence  concerning the $S_n$-action on  
the full cohomology ring $H^{\bullet}(P_n, \QQ)$. 
 {The structure of the cohomology ring of the pure braid group $H^{\bullet}(P_n, \QQ)$ 
as an $S_n$-module has an extensive literature.
Orlik and Solomon \cite{OrlikS:1980} noted that  $H^{\bullet}(P_n, \QQ) \simeq H^{\bullet} (M(\sA_n), \QQ)$ as $S_n$-modules, where 
$$M(\sA_n) = \CC^n \smallsetminus \cup_{H \in \sA_n} H$$
is the complement of the (complexified) braid arrangement $\sA_n$, i.e. the arrangement of $n(n-1)/2$ hyperplanes $z_i = z_j$ 
in $\CC^n$ where $1\leq i < j \leq n$ are the coordinate functionals of $\CC^n$. }
The structure of the cohomology groups $H^k(M(\sA_n), \CC) = H^k(M(\sA_n),\QQ)\otimes \CC$ as  $S_n$-representations was determined 
 in 1986 by Lehrer and Solomon \cite[Theorem 4.5]{LS:1986}  in terms of induced representations
$\Ind_{Z(C_\lambda)}^{S_n}(\xi_{\lambda}) $ for  specific linear representations $\xi_{\lambda}$ on the centralizers $Z(C_{\lambda})$ of 
conjugacy classes $C_{\lambda}$ having $n-k$ cycles. 
In 1987 Lehrer \cite[p.  276]{Lehrer:1987} noted that  his  results on Poincar\'{e} polynomials
  implied the ``curious consequence'' that the 
action of $S_n$ on $\bigoplus_k H^{k}(M(\sA_n, \CC))$ is ``almost'' the regular representation in the sense that the dimension is $n!$ and the character $\theta(g)$
of this representation is $0$ unless $g$ is the identity element or a transposition, see 
also \cite[Corollary (5.5)', Prop. (5.6)]{Lehrer:1987}. 
where $r$ is a reflection and $1$ is the trivial representation.
In  Section \ref{sec:53} we apply
Theorem \ref{thm:main-1} together with
 values of the $(-1)$-splitting measure computed in \cite{Lagarias:2016} to 
 make  a precise connection  between the 
$S_n$-representation structure  on pure braid group cohomology and the regular representation $\QQ[S_n]$.

%
%
\begin{thm}\label{thm:main-2}
Let $\triv_n$,  $\Sign_n$, and $\QQ[S_n]$ be the trivial, sign, and regular representations of $S_n$ respectively. Then there is an isomorphism of $S_n$-representations,
\begin{equation*}
\label{reg isom}
	\bigoplus_{k=0}^n{H^k(P_n,\QQ)\otimes \Sign_n^{\otimes k}} \cong \QQ[S_n].
\end{equation*}
Here $\Sign_n^{\otimes k}\cong \triv_n$ or $\Sign_n$ according to whether $k$ is even or odd.
\end{thm}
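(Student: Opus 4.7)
The plan is to compute the character $\theta_n$ of $\bigoplus_{k=0}^n H^k(P_n,\QQ)\otimes \Sign_n^{\otimes k}$, which is
\[
\theta_n(g) = \sum_{k=0}^n h_n^k(g)\,\sign(g)^k,
\]
and to compare it to the character of $\QQ[S_n]$, namely $n!\cdot\delta_{g,e}$. The theorem thus reduces to showing $\theta_n(e)=n!$ and $\theta_n(g)=0$ for every $g\neq e$.

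First I would exploit the direct sum decomposition $H^k(P_n,\QQ)\cong A_n^{k-1}\oplus A_n^k$ from Section~\ref{sec:N41} to write $h_n^k=\chi_n^{k-1}+\chi_n^k$, with the convention $\chi_n^{-1}=\chi_n^n=0$. A short index shift turns the defining sum for $\theta_n$ into a telescope:
\[
\theta_n(g) = \bigl(1+\sign(g)\bigr)\sum_{k=0}^{n-1}\chi_n^k(g)\,\sign(g)^k.
\]
This already forces $\theta_n(g)=0$ for every odd permutation $g$, disposing of half of the conjugacy classes at once.

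Next, for an even $g\in C_\lambda$ every factor $\sign(g)^k$ equals $1$, so applying Theorem~\ref{thm:main-1} at the parameter $z=-1$ (which sends $(-1/z)^k$ to $1$) gives
\[
\theta_n(g) = 2\sum_{k=0}^{n-1}\chi_n^k(\lambda) = \frac{2\cdot n!}{|C_\lambda|}\,\nu_{n,-1}^{\ast}(C_\lambda) = \frac{(-1)^n\,n!\,N_\lambda(-1)}{|C_\lambda|},
\]
where the last equality uses the definition $\nu_{n,z}^{\ast}(C_\lambda)=N_\lambda(z)/(z^n-z^{n-1})$. The theorem is therefore reduced to the combinatorial claim that $N_{(1^n)}(-1)=(-1)^n$ while $N_\lambda(-1)=0$ for every even $\lambda\neq(1^n)$.

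The final step is to evaluate the necklace polynomials $M_j$ at $-1$. A M\"{o}bius calculation, split by parity and the $2$-adic valuation of $j$, yields $M_1(-1)=-1$, $M_2(-1)=1$, and $M_j(-1)=0$ for all $j\geq 3$. Substituting into $N_\lambda(-1)=\prod_j\binom{M_j(-1)}{m_j(\lambda)}$ forces the product to vanish unless $m_j(\lambda)=0$ for every $j\geq 3$ and $m_2(\lambda)\leq 1$; the only such partitions of $n$ are $(1^n)$ and the transposition type $(2,1^{n-2})$. The latter is odd and hence excluded from the even case, while $N_{(1^n)}(-1)=\binom{-1}{n}=(-1)^n$ produces $\theta_n(e)=n!$. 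The chief obstacle I anticipate is the vanishing $M_j(-1)=0$ for $j\geq 3$, a routine but case-sensitive M\"{o}bius computation; once that is in hand, the remainder is bookkeeping.
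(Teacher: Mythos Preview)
Your proof is correct and follows essentially the same route as the paper: both reduce to the decomposition $H^k(P_n,\QQ)\cong A_n^{k-1}\oplus A_n^k$, factor out $(1+\sign)$, and then invoke the $z=-1$ specialization of Theorem~\ref{thm:main-1} to reduce everything to the values of $\nu_{n,-1}^\ast$. The one substantive difference is that the paper imports those values from \cite[Theorem~6.1]{Lagarias:2016}, whereas you compute them directly by evaluating $M_j(-1)$ (getting $-1,1,0,0,\ldots$) and reading off $N_\lambda(-1)$ from the product formula. Your version is thus more self-contained; the paper's version is shorter but relies on an external citation for exactly the combinatorial fact you supply.
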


When combined   with Lehrer's \cite[Prop. 5.6 (i)]{Lehrer:1987} determination 

{of the character $\theta$
as $2 \,\Ind_{\langle \tau \rangle}^{S_n}(1)$, where $\tau$ is a transposition,}
this result implies  that  each of the characters of the $S_n$-representations acting on the even-dimensional cohomology, resp. odd-dimensional cohomology are  supported on the identity element plus transpositions. We comment on other related work in Section \ref{sec:12}.

{ In Section \ref{sec:stability} we describe further interpretations of the representations $A_n^k$
in terms of other combinatorial homology theories. 
 For fixed $k$ and varying $n$, the sequence of $S_n$-representations $H^{k} (P_n, \QQ)$ was one of
 the basic examples exhibiting 
\emph{representation stability} in the sense of Church and Farb \cite{CF:2013}, see \cite{CEF:2014}, \cite{CEF:2015}).}
We show  in Proposition \ref{prop:connections} that the  representations $A_n^k$ are isomorphic to others appearing in the literature
known to exhibit representation stability.  Hersh and Reiner \cite[Corollary 5.4]{HerRein:2015} { determine the precise rate of stabilization of these representations,}
yielding the following result.

%
%
\begin{thm}[Representation stability for $A_n^k$]\label{thm:main-3a} 
For each fixed $k \ge 1$,
the sequence of $S_n$-representations  $A_n^k$ 
with characters $\chi_n^k$ are representation stable, and stabilize sharply at $n=3k+1$. 
\end{thm}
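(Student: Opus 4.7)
The plan is to reduce the assertion to the explicit stability bounds of Hersh and Reiner via the identification of $A_n^k$ promised by Proposition \ref{prop:connections}. First I would invoke Proposition \ref{prop:connections} to identify, for each fixed $k$, the sequence $\{A_n^k\}_{n\ge 1}$ with a concrete sequence of $S_n$-representations already studied in the literature on the cohomology of the braid arrangement complement. One must check that the isomorphisms supplied by that proposition are $S_n$-equivariant and compatible with the natural inclusions $S_n \hookrightarrow S_{n+1}$ used to define representation stability, so that the stability properties transfer without shift.

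Once that identification is in hand, the qualitative half of the theorem (representation stability in the sense of Church and Farb) follows by two independent routes. The first route uses the direct sum decomposition $H^k(P_n,\QQ) \simeq A_n^{k-1} \oplus A_n^k$ established in Section \ref{sec:N41}, together with the known representation stability of the sequence $H^k(P_n,\QQ)$ due to Church, Ellenberg, and Farb; a short induction on $k$ then yields stability of each $\{A_n^k\}_{n \ge 1}$. The second route applies the identification of Proposition \ref{prop:connections} and invokes the representation stability of the corresponding combinatorially defined sequence directly from Hersh and Reiner.

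The quantitative half, namely sharp stabilization at $n = 3k+1$, is where the real work lies and where I expect the main obstacle to sit. Generic stability arguments, for instance those coming from the general FI-module machinery applied to $H^k(P_n,\QQ)$, only give bounds of the shape $n \ge 2k$ or $n \ge 4k$ and are known not to be tight for this specific sequence. To obtain the precise value $n = 3k+1$, I would appeal directly to Corollary 5.4 of Hersh and Reiner, which tracks multiplicities of the irreducible $S_n$-representations indexed by padded partitions and pins down the last $n$ at which a new irreducible first appears in the stable decomposition. The decomposition $H^k(P_n,\QQ) \simeq A_n^{k-1} \oplus A_n^k$ is compatible with this count, so the Hersh--Reiner bound applies to each summand.

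The remaining care needed is to verify that the identification in Proposition \ref{prop:connections} matches the indexing convention of Hersh and Reiner, so that the stabilization index transfers as $n = 3k+1$ rather than being shifted by one. Once this bookkeeping is done, sharpness is established by exhibiting, for the specific value $n = 3k$, an irreducible $S_{3k+1}$-representation appearing in $A_{3k+1}^k$ whose corresponding irreducible $S_{3k}$-representation (under the stable labeling) does not appear in $A_{3k}^k$; this nonvanishing statement is precisely what Hersh and Reiner record in their explicit formulas.
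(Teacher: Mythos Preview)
Your proposal is correct and follows essentially the same approach as the paper: identify $A_n^k$ with $\beta_{[k]}(\Pi_n)$ via Proposition~\ref{prop:connections}, then invoke Hersh and Reiner's Corollary~5.4 for both representation stability and the sharp bound $n=3k+1$. The paper's proof is in fact even terser than yours---it is two sentences---and does not separately verify the compatibility with the inclusions $S_n\hookrightarrow S_{n+1}$ that you flag; your ``first route'' via induction on the decomposition $H^k(P_n,\QQ)\cong A_n^{k-1}\oplus A_n^k$ is an extra alternative the paper does not pursue.
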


To summarize these results:
\begin{itemize}
\item[(i)]
We start from {a construction in} number theory:  a set of probability measures on $S_n$ that
         {describe the distribution of degree $n$ squarefree monic polynomial factorizations $\pmod{p}$} defined for a parameter  $z$  being a prime $p$.  These measure values
         interpolate at each fixed  $g \in S_n$ in the $z$-variable as polynomials in
         $1/z$  to define complex-valued measures  on $S_n$.
\item[(ii)]
We make a connection {of the interpolated measures as functions of $z$}  to topology and representation theory:
For fixed $n$  the $k$th Laurent coefficients of the $z$-parametrization at $g \in S_n$ 
(rescaled by $n!$) coincide with the  character of an
$S_n$-subrepresentation $A_n^k$
  of  the cohomology of the pure braid group $P_n$, which is an $S_n$-representation on the cohomology of
{ the complex manifold} $Y_n= \pconf_n(\CC)  / \CC^{\times}$. As $n$ varies with $k$ fixed these
   coefficients { exhibit}  representation stability as $n \to \infty.$
   \item[(iii)]
We deduce that  (rescaled) measure values  at values  $z= -\frac{1}{m}$ for  $m \ge 1$ coincide with characters of 
          certain $S_n$-representations; those at $z= \frac{1}{m}$ with $m \ge 1$ coincide with certain virtual $S_n$-representations.  
           For each $n$ these  representations  combine
          stable and unstable cohomology of $P_n$. 

    \item[(iv)]As a by-product  we find a  precise connection between the (total) cohomology of the pure braid group
as an $S_n$-representation and the regular representation of $S_n$.

\end{itemize}
{The main observation of this paper is the  relation of these interpolation measures to representation theory.
We demonstrate  this relation by  calculation, and leave open the problem of finding
a deeper conceptual explanation for its existence.}


\subsection{Related work}\label{sec:12}
 The representations $A_n^k$ have  appeared in the literature in numerous places.
In particular, a 1995 result of Getzler \cite[Corollary 3.10]{Getzler:1995} permits an identification of
$A_n^k$ as an $S_n$-module with the $k$th cohomology group of the moduli space $\sM_{0, n+1}$ of
the Riemann sphere with $n+1$ marked points, viewed as an $S_n$-module, holding one point  fixed. Getzler identifies this cohomology
with the $S^1$-equivariant cohomology of 
$\pconf_n(\CC)$, which is  the cohomology of $Y_n$ given in Theorem \ref{thm:splitting-coeffs2}. 
Some more recent occurrences of $A_n^k$ are  discussed  in Section \ref{sec:stability}.

In connection with Theorem \ref{thm:main-2}, in  1996 Gaiffi \cite{Gai:1996} further explained Lehrer's 
formula $\theta= 2 \,\Ind_{\langle \tau \rangle}^{S_n}(1)$
 by showing that
\[
    H^{\bullet}( M(\sA_{n-1}), \CC) \simeq H^{\bullet}( M(d\sA_{n-1}), \CC) \otimes \big(\CC \oplus \tfrac{\CC[\varepsilon]}{ \varepsilon^2}\big),
\]
as $S_n$-modules, where $d\sA_{n-1}$ is obtained by a deconing construction, 
while the class $\varepsilon$ has degree $1$ and carries the
trivial $S_n$-action. (His space $M(\sA_{n-1})$ lies in $\CC^{n-1}$ and is obtained by restricting the braid arrangement on $\CC^n$ 
to the hyperplane $x_1+ x_2 + \cdots + x_n=0$ in $\CC^{n}$, and the deconed configuration space $M(d\sA_{n-1}) \subset \CC^{n-2}$.)
On comparison with our direct sum decomposition we have $H^k(d \sA_{n-1}, \CC) \simeq A_n^k$ as $S_n$-modules,
showing  that the deconed space $d\sA_{n-1}$ has an isomorphic cohomology ring as the complex manifold $Y_n$  with
an appropriate $S_n$-module structure.
Gaiffi and also Mathieu \cite{Mathieu:1996}
 showed there is a ``hidden''  $S_{n+1}$-action on this cohomology ring.
For more recent developments on the ``hidden'' action see Callegaro and Gaiffi \cite{CallegaroG:2015}.


\subsection{Plan of the Paper }\label{subsec: plan}
In Section \ref{sec:splitting-measure} we recall properties of the $z$-splitting measures from \cite{Lagarias-W:2015}.
In Section \ref{sec:rep-interpretation} we use the twisted Grothendieck-Lefschetz formula to relate  the coefficients of cycle polynomials  
to the characters of the $S_n$-representations $H^k(P_n,\QQ)$. 
In Section \ref{sec:braid-group} we discuss the cohomology $H^k(P_n, \QQ)$ of the pure braid group $P_n$,
and derive an exact sequence leading to the construction of the $S_n$-representations $A_n^k$.
In Section \ref{sec:splitting-characters} we express
 the splitting measure coefficients $\alpha_n^k(C_\lambda)$ in terms of the character $\chi_n^k$ of the representation $A_n^k$.
In Section \ref{sec:stability}  we discuss representation stability and connect the $S_n$-representations  $A_n^k$ with others in the literature.


\subsection{Notation} \label{subsec:notation}
\begin{enumerate}[leftmargin=*]
	\item $q=p^f$  denotes a prime power.
    
	\item The set of monic, degree $n$, square-free polynomials in $\FF_q[x]$ is denoted $\conf_n(\FF_q)$. 
    
    \item We write partitions either as $\lambda =\big[\lambda_1, \lambda_2, \cdots, \lambda_{\ell}\big]$, 
        with parts $\lambda_1 \ge \lambda_2 \ge \cdots$      
        eventually $0$, or as $\lambda = (1^{m_1} 2^{m_2}\cdots)$ where $m_j = m_j(\lambda)$ is the number of parts of $\lambda$ of size $j$.
 The length of $\lambda$ is $\ell(\lambda) = \max\{ r : \lambda_r \ge 1\}$,  the size of $\lambda$ is $|\lambda| = \sum_{i} \lambda_i = \sum_j{jm_j}$,
 and $\lambda_i$ is the $i$th largest part of $\lambda$. (Compare \cite{Macdonald:1995}.)

	\item Each partition $\lambda$ of $n$ corresponds to a conjugacy class 
	$C_{\lambda}$ of $S_n$ given by the common cycle structure of the elements in $C_\lambda$. 
	We let $Z_\lambda$ denote the centralizer of $C_{\lambda}$ in $S_n$. The size of the centralizer and conjugacy class are 
	\[
		z_\lambda := |Z_{\lambda}| = \prod_{j \geq 1}{j^{m_j(\lambda)}m_j(\lambda)!} \hspace{2em} c_\lambda:= |C_{\lambda}|= \frac{n!}{z_\lambda}
	\]
	respectively. Note that $c_\lambda z_\lambda = n!$.
	
	\item
	Following  Stanley \cite{Stanley:1997}, we let 
	$\part(n)$ denote the set of partitions of $n$ and $\part = \bigcup_n{\part(n)}$ the set of all partitions.  However 
	in  Section \ref{sec:stability}, we let $\Pi_n$ denote the set of partitions of $n$, partially ordered by refinement.
	
\end{enumerate}

\section{Splitting Measures}\label{sec:splitting-measure}
 
We review the splitting measures introduced in \cite{Lagarias-W:2015}, summarize their properties, and introduce the normalized splitting measures.

\subsection{Necklace polynomials and cycle polynomials}\label{subsec:necklace}


\begin{defi}\label{de21}
{\em 
For $j\geq 1$, the \emph{$j$th necklace polynomial} $M_j(z)\in \frac{1}{j}\ZZ[z]$ is
\[
    M_j(z) := \frac{1}{j}\sum_{d\mid j}{\mu(d) z^{j/d}},
\]
where $\mu(d)$ is the M\"{o}bius function.}
\end{defi}

Moreau \cite{Moreau:1872} noted in 1872  
that  for  all integers $m \ge 1$, $M_j(m)$ is the number of distinct
necklaces having $j$ beads drawn from a set of $m$ colors, up to cyclic permutation.
This fact motivated  Metropolis and Rota \cite{Metropolis:1983} to name them {\em necklace polynomials.}
Relevant to the present paper, $M_j(q)$ is the number of monic, degree $j$, irreducible polynomials in
 $\FF_q[X]$ \cite[Prop. 2.1]{Rosen:2002}. 
The factorization type of a polynomial $f\in \conf_n(\FF_q)$
is the partition formed by the degrees of its irreducible factors, which we write $[f]$.

\begin{defi}\label{def: cycle}
{\em 
Given a partition $\lambda$ of $n$, the \emph{cycle polynomial} $N_\lambda(z) \in \frac{1}{z_\lambda}\ZZ[z]$ is
\[
    N_\lambda(z) := \prod_{j\geq 1}{\binom{M_j(z)}{m_j(\lambda)}},
\]
where $\binom{\alpha}{m}$ is the usual extension of a binomial coefficient,
\[
	\binom{\alpha}{m} := \frac{1}{m!}\prod_{k=0}^{m-1}{(\alpha - k)}.
\]
}
\end{defi}

The cycle polynomial $N_\lambda(z)$ has degree $n = |\lambda|$ and is integer valued for $z\in\ZZ$.
The number of $f\in \conf_n(\FF_q)$ with $[f] = \lambda$ is $N_\lambda(q)$ (see \cite[Sect. 4]{Lagarias-W:2015}.)

\subsection{$z$-splitting measures}\label{subsec:splitting}

If $\lambda$ a partition of $n$, then the probability of a uniformly chosen $f\in \conf_n(\FF_q)$ having factorization type $\lambda$ is
\[
	\mathrm{Prob}\{f\in \conf_n(\FF_q) : [f] = \lambda\} = \frac{N_\lambda(q)}{|\conf_n(\FF_q)|}.
\]
When $n = 1$, $|\conf_n(\FF_q)| = q$ and for $n\geq 2$ we have $|\conf_n(\FF_q)| = q^n - q^{n-1}$.
(See \cite[Prop. 2.3]{Rosen:2002} for a proof via generating functions. A proof due to Zieve appears in \cite[Lem. 4.1]{Weiss:2013}.) Hence, the probability is a rational function in $q$. 
Replacing $q$ by a complex-valued parameter $z$ yields the $z$-splitting measure.

\begin{defi}\label{de23}
For $n \ge 2$ the  \emph{$z$-splitting measure} $\nu_{n,z}^*(C_{\lambda}) \in \QQ(z)$
 is given by
\[
	\nu_{n,z}^{\ast}(C_{\lambda}) := \frac{N_\lambda(z)}{z^n - z^{n-1}}.
\]
\end{defi}

\begin{prop}\label{laurent-poly}
For each partition $\lambda$ of $n \ge 1$, the rational function
$\nu_{n, z}^{\ast}(C_{\lambda})$ is a polynomial in $\frac{1}{z}$
of degree at most $n-1$. Thus it may be written as
\[
	\nu_{n, z}^{\ast}(C_{\lambda}) = \sum_{k=0}^{n-1} \alpha_n^k(C_{\lambda})\big(\tfrac{1}{z}\big)^k.
\]
 The function $\nu_{1, z}^{\ast}(C_{1}) = 1$ is independent of $z$.
\end{prop}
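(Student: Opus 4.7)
The plan is to reduce the claim to showing that $N_\lambda(z)$ vanishes at $z=1$ (for $n\ge 2$), so that the factor $z-1$ in the denominator $z^n - z^{n-1} = z^{n-1}(z-1)$ cancels, after which division by $z^{n-1}$ gives a polynomial in $1/z$ of the required degree. The $n=1$ case will be handled by direct inspection.

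The first step is to evaluate the necklace polynomials at $z=1$. From the definition $M_j(z) = \frac{1}{j}\sum_{d\mid j}\mu(d) z^{j/d}$, the M\"obius identity $\sum_{d\mid j}\mu(d) = [j=1]$ gives $M_1(1)=1$ and $M_j(1)=0$ for all $j\ge 2$. The second step is to use this to analyze $N_\lambda(1) = \prod_{j\ge 1}\binom{M_j(1)}{m_j(\lambda)}$. There are two cases: if $\lambda$ has any part of size $j\ge 2$, then $m_j(\lambda)\ge 1$ and the factor $\binom{M_j(1)}{m_j(\lambda)} = \binom{0}{m_j(\lambda)} = 0$ kills the product; if $\lambda = (1^n)$, then $N_\lambda(z) = \binom{z}{n}$, so $N_\lambda(1) = \binom{1}{n} = 0$ for $n\ge 2$. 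Thus $N_\lambda(1)=0$ in every case with $n\ge 2$.

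The third step is the algebraic conclusion. Since $N_\lambda(z)\in\QQ[z]$ has degree exactly $n$ and vanishes at $z=1$, we may write $N_\lambda(z) = (z-1)\,P_\lambda(z)$ with $P_\lambda(z)\in\QQ[z]$ of degree $n-1$. Then
\[
\nu_{n,z}^{\ast}(C_\lambda) \;=\; \frac{N_\lambda(z)}{z^{n-1}(z-1)} \;=\; \frac{P_\lambda(z)}{z^{n-1}},
\]
and writing $P_\lambda(z) = \sum_{j=0}^{n-1} c_j z^j$ yields $\nu_{n,z}^{\ast}(C_\lambda) = \sum_{k=0}^{n-1} c_{n-1-k}(1/z)^k$, a polynomial in $1/z$ of degree at most $n-1$. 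The coefficients $\alpha_n^k(C_\lambda)=c_{n-1-k}\in\QQ$ are as claimed. Finally for $n=1$, the unique partition is $\lambda=(1)$ with $N_{(1)}(z) = M_1(z) = z$, and by the convention $|\conf_1(\FF_q)|=q$ noted just before Definition \ref{de23}, we have $\nu_{1,z}^{\ast}(C_1) = z/z = 1$.

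I do not anticipate a real obstacle: the whole argument rests on the single observation $M_j(1)=0$ for $j\ge 2$, which is immediate from M\"obius inversion, together with the trivial identity $\binom{1}{n}=0$ for $n\ge 2$. The only subtlety worth flagging is the slight irregularity at $n=1$, where the denominator in Definition \ref{de23} would formally give $z-1$ rather than $z$; the proposition's assertion that $\nu_{1,z}^{\ast}(C_1)=1$ reflects the true count $|\conf_1(\FF_q)| = q$ and must be verified separately, as above.
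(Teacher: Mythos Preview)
Your proof is correct and follows essentially the same approach as the paper: both show $N_\lambda(1)=0$ for $n\ge 2$ so that the factor $z-1$ cancels, leaving $\frac{1}{z^{n-1}}$ times a polynomial of degree $n-1$. The only difference is that the paper cites an external lemma for $N_\lambda(1)=0$, whereas you supply the direct argument via $M_j(1)=[j=1]$ and $\binom{1}{n}=0$.
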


\begin{proof} The case $n=1$ is clear.
For $n\geq 2$ we have $N_\lambda(1) = 0$ by  \cite[Lemma 2.5]{Lagarias:2016}, whence $\frac{N_\lambda(z)}{z-1}$
is a polynomial of degree at most $n-1$ in $z$. Therefore,
\[
	\nu_{n, z}^{\ast}(C_\lambda) = \frac{N_\lambda(z)}{z^n - z^{n-1}} = \frac{1}{z^{n-1}}\left(\frac{N_\lambda(z)}{z-1}\right)
\]
is a  polynomial in $\frac{1}{z}$ of degree at most $n - 1$. 
\end{proof}

 For $n \ge 2$ the Laurent polynomial $\nu_{n, z}^{\ast}(C_{\lambda})$ is of degree at most $n-2$ since
   $z \mid N_{\lambda}(z)$ (\cite[Lemma 4.3]{Lagarias-W:2015}); that is,  $\alpha_n^{n-1}(C_{\lambda}) =0$.
Tables \ref{table11} and \ref{table12}   give 
 $\nu^*_{n,z}(C_{\lambda})$, exhibiting the splitting measure coefficients $\alpha_n^k(C_{\lambda})$
 for $n=4$ and $n=5$.
\begin{table}[h]
\renewcommand{\arraystretch}{1.3}
\begin{center}
	\begin{tabular}{| l |c |c| l | l }
	\hline
	$\lambda$ & $|C_\lambda|$ & $z_\lambda$ & $\nu^*_{4,z}(C_{\lambda})$   \\ \hline
	$[1,1,1,1]$ & $1$ & $24$& $\frac{1}{24}\big(1 -\frac{5}{z} + \frac{6}{z^2}\big)$ \\
	$[2,1,1]$ &  $6$ &  $4$ &$\frac{1}{4}\big(1 - \frac{1}{z}\big)$ \\
	$[2,2]$ & $3$ & $8$ &$\frac{1}{8}\big(1 - \frac{1}{z} - \frac{2}{z^2}\big)$ \\
	$[3,1]$ & $8$ & $3$ &$\frac{1}{3}\big(1 + \frac{1}{z}\big)$ \\
	$[4]$ &  $6$ & $4$ &$\frac{1}{4}\big(1 + \frac{1}{z}\big)$ \\
	\hline
	\end{tabular}
	\medskip
		\caption{Values of the $z$-splitting measures $\nu_{4, z}^{\ast}(C_\lambda)$  on partitions $\lambda$ of $n=4$.}
	\label{table11}
\end{center}
\end{table}

\begin{table}[h]
\renewcommand{\arraystretch}{1.3}
\begin{center}
	\begin{tabular}{| l |c |c| l | l |}
	\hline
	$\lambda$ &  $|C_\lambda|$ & $z_\lambda$ & $\nu^*_{5,z}(C_\lambda)$ \\ \hline
	$[1,1,1,1,1]$ &$1$ & $120$ &$\frac{1}{120}\big(1 - \frac{9}{z} + \frac{26}{z^2} - \frac{24}{z^3}\big)$ \\
	$[2,1,1,1]$ & $10$ &$12$ &$\frac{1}{12}\big(1 - \frac{3}{z} + \frac{2}{z^2}\big)$ \\
	$[2,2,1]$ & $15$ &$8$ &$\frac{1}{8}\big(1 - \frac{1}{z} - \frac{2}{z^2}\big)$ \\
	$[3,1,1]$ & $20$ &$6$ &$\frac{1}{6}\big(1 - \frac{1}{z^2}\big)$ \\
	$[3,2]$ & $20$ &$6$ &$\frac{1}{6}\big(1 - \frac{1}{z^2}\big)$ \\
	$[4,1]$ & $30$ &$4$ &$\frac{1}{4}\big(1 + \frac{1}{z}\big)$ \\
	$[5]$ & $24$ &$5$ &$\frac{1}{5}\big(1 + \frac{1}{z} + \frac{1}{z^2} + \frac{1}{z^3}\big)$ \\
	\hline
	\end{tabular}
	\medskip
	\caption{Values of the $z$-splitting measures $\nu_{5, z}^{\ast}(C_\lambda)$ on partitions $\lambda$ of $n=5$.}
	\label{table12}
\end{center}
\end{table}

\newpage
\section{Interpretation of Cycle Polynomial Coefficients}\label{sec:rep-interpretation}

In Section \ref{subsec:necklace} we defined
the cycle polynomials $N_\lambda(z) \in \frac{1}{z_\lambda}\ZZ[z]$  for each partition $\lambda$ of  $n$.
In this section  we express the coefficients of $N_\lambda(z)$ as a function of $\lambda$
in terms of characters $h_n^k$ of the cohomology of the pure braid group $P_n$ viewed as an $S_n$-representation.
We establish this connection using  the twisted Grothendieck-Lefschetz formula of Church, Ellenberg, and Farb \cite{CEF:2014}.
 Using explicit formulas for the cycle polynomials we obtain constraints on the support of $h_n^k$,
 and we compute $h_n^k(\lambda)$ for varying $n$ in several examples.
 
\subsection{Cohomology of the pure braid group}\label{subsec:braid-group}

Given a set $X$ of $n$ distinct points in $3$-dimensional
affine space, the \emph{braid group} $B_n$ consists of homotopy classes of simple, non-intersecting paths beginning and terminating in $X$, with concatenation as the group operation. Each element of $B_n$ determines a permutation of $X$, giving a short exact sequence of groups
\[
	0 \rightarrow P_n \rightarrow B_n \xrightarrow{\pi} S_n \rightarrow 0.
\]
Then  $P_n := \ker \pi$
is called the \emph{pure braid group}. $P_n$ consists of homotopy classes of simple, non-intersecting \emph{loops} based in $X$. The action of $S_n$ on $X$ induces an action on $P_n$ by permuting the loops. Thus, for each $k$, the $k$th group cohomology $H^k(P_n,\QQ)$ carries  an $S_n$-representation whose character we denote by $h_n^k$.
 
 \subsection{Twisted Grothendieck-Lefschetz formula}\label{subsec:twist}

A \emph{character polynomial} is a polynomial $P(x) \in \QQ[x_j : j\geq 1]$.
Character polynomials induce functions $P: \part \rightarrow \QQ$ by
\[
    P(\lambda) := P\big(m_1(\lambda), m_2(\lambda), \ldots\big),
\]
noting that $m_i(\lambda) = 0$ for all but finitely many $i$. 
For $f \in \conf_n(\FF_q)$ we let $P(f) := P([f])$.
Given two $\QQ$-valued functions $F$ and $G$ defined on $S_n$ let 
\[
	\langle F, G \rangle := \frac{1}{n!}\sum_{g \in S_n}{F(g)G(g)}.
\]
The following Theorem is due to Church, Ellenberg, and Farb \cite[Prop. 4.1]{CEF:2014}.

\begin{thm}[Twisted Grothendieck-Lefschetz formula for $\pconf_n$]
\label{GrothLef}
Given a prime power $q$, an integer $n\geq 1$, and a character polynomial $P$, we have
\begin{equation}\label{twistEq}
    \sum_{f\in \conf_n(\FF_q)}{P(f)} = \sum_{k=0}^n{(-1)^k \big\langle P, h_n^k\big\rangle\, q^{n-k}},
\end{equation}
where $h_n^k$ is the character of
the cohomology of the pure braid group $H^k(P_n, \QQ)$.
\end{thm}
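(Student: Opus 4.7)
The plan is to apply the $\ell$-adic Grothendieck-Lefschetz trace formula to a local system on $\conf_n$ built from $P$ via the étale Galois cover $\pi \colon \pconf_n \to \conf_n$, and then to use Arnol'd's presentation of $H^{\bullet}(P_n,\QQ)$ together with Poincaré duality and Tate purity to convert the resulting Frobenius traces into the characters $h_n^k$.

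First, I would set up the local system. The map $\pi$ sending an ordered $n$-tuple of distinct points to the monic polynomial with those roots is étale Galois with group $S_n$, defined over $\mathrm{Spec}\,\ZZ$. For $f\in\conf_n(\FF_q)$ of factorization type $\lambda$, geometric Frobenius acts on $\pi^{-1}(f)(\overline{\FF}_q)$ by a permutation of cycle type $\lambda$, so its image in the Galois group $S_n$ lies in $C_\lambda$. The character polynomial $P$ defines a class function on each $S_n$ and thus corresponds (over $\QQ_\ell$) to a virtual representation $V_P$; from it one builds the lisse $\QQ_\ell$-sheaf
\[
\mathcal{L}_P := (\pi_*\QQ_\ell \otimes V_P)^{S_n},
\]
whose Frobenius trace at $f$ is $\tr(\mathrm{Frob}_f \mid \mathcal{L}_{P,f}) = P(f)$. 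Applying the classical Grothendieck-Lefschetz trace formula to $\mathcal{L}_P$ and performing descent along $\pi$ (using that $\mathrm{Frob}^{*}$ commutes with the $S_n$-action, so $S_n$-invariants can be taken by averaging) gives
\[
\sum_{f\in\conf_n(\FF_q)} P(f) \;=\; \frac{1}{n!}\sum_{g\in S_n} P(g) \sum_{k} (-1)^k \tr\!\bigl(\mathrm{Frob}^{*}\!\cdot g \,\bigm|\, H^k_c(\pconf_n\otimes\overline{\FF}_q,\QQ_\ell)\bigr).
\]

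Next, I would evaluate the inner sum using Arnol'd's presentation of $H^{\bullet}(P_n,\QQ)$ by the one-forms $\omega_{ij}=d\log(x_i-x_j)$. Comparison between étale and singular cohomology via $\pconf_n(\CC) = K(P_n,1)$ together with the fact that each $\omega_{ij}$ is a Kummer class of weight one shows that as $S_n \times \mathrm{Gal}(\overline{\FF}_q/\FF_q)$-modules
\[
H^j(\pconf_n,\QQ_\ell) \cong H^j(P_n,\QQ) \otimes_{\QQ} \QQ_\ell(-j)
\]
is pure Tate (Frobenius acts by $q^j$ in degree $j$). Poincaré duality on the smooth affine $n$-fold $\pconf_n$ yields $H^k_c \cong H^{2n-k,\vee}(-n)$, and since all $S_n$-characters are integer-valued (so $h_n^j(g^{-1}) = h_n^j(g)$), a short bookkeeping of Tate twists gives $\tr(\mathrm{Frob}^{*}\!\cdot g \mid H^k_c(\pconf_n,\QQ_\ell)) = q^{k-n}\,h_n^{2n-k}(g)$. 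Reindexing by $j = 2n-k$ collapses the inner sum to $\sum_{j=0}^{n} (-1)^j q^{n-j}\,h_n^j(g)$; substituting back and recognizing $\frac{1}{n!}\sum_g P(g)\,h_n^j(g) = \langle P, h_n^j\rangle$ yields exactly \eqref{twistEq}.

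The main obstacle is the purity input of the previous paragraph: one must verify that $H^{\bullet}(\pconf_n,\QQ_\ell)$ is pure Tate with Frobenius acting by $q^j$ in degree $j$, and then correctly track the Tate twists through Poincaré duality so that the factor $q^{n-k}$ in the theorem emerges cleanly. Everything else---the construction of $\mathcal{L}_P$, descent along $\pi$, and the identification of the Frobenius conjugacy class in $S_n$ with the cycle type $[f]$---is formal once this framework is in place.
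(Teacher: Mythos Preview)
The paper does not give its own proof of this theorem; it is quoted from Church, Ellenberg, and Farb \cite[Prop.~4.1]{CEF:2014}, with only a paragraph of high-level context (Grothendieck--Lefschetz, comparison theorems, the $K(\pi,1)$ property of $\pconf_n(\CC)$) in lieu of an argument. Your sketch is a correct and essentially complete outline of the standard proof, and it matches both that contextual discussion and the argument in \cite{CEF:2014} and \cite{Lehrer:1992}: build the local system from the \'etale $S_n$-cover $\pconf_n\to\conf_n$, apply the trace formula, then use purity of $H^{\bullet}(\pconf_n,\QQ_\ell)$ (which follows from the Arnol'd/Orlik--Solomon presentation by Kummer classes) together with Poincar\'e duality to identify the Frobenius eigenvalues as powers of $q$. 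The bookkeeping you do with Tate twists and the reindexing $j=2n-k$ is correct, and your remark that the only substantive input is the Tate-purity of the cohomology is exactly right.
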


The classic {Lefschetz trace formula} counts the fixed points of an endomorphism $f$ on a compact manifold $M$ by the
 trace of the induced map on the singular cohomology of $M$. 
One may interpret the $\overline{\FF}_q$ points on an algebraic variety $V$ defined over $\FF_q$ as the fixed points of the 
\emph{geometric Frobenius endomorphism} of $V$. 
Using the machinery of $\ell$-adic \'{e}tale cohomology, Grothendieck \cite{Gro:1963} generalized Lefschetz's formula to
 count the number of points in $V(\FF_q)$
 by the trace of
Frobenius on the \'{e}tale cohomology of $V$. 
For nice varieties $V$ defined over $\ZZ$, there are comparison theorems relating the \'{e}tale cohomology of $V(\overline{\FF}_q)$ to 
the singular cohomology of $V(\CC)$. 
This connects the topology of a complex manifold to point counts of a variety over a finite field.  For hyperplane complements the connection was
made in 1992 by Lehrer \cite{Lehrer:1992}, and for equivariant actions of a finite group on varieties the 
equivariant Poincar\'{e} polynomials were determined by Kisin and Lehrer \cite{KisinL:2002} in 2002.

Church, Ellenberg, and Farb \cite{CEF:2014} build upon Grothendieck's extension of the Lefschetz formula to relate point counts on natural subsets of $\conf_n(\FF_q)$ to the singular cohomology of the covering space $\pconf_n(\CC) \rightarrow \conf_n(\CC)$. 
 $\pconf_n(\CC)$ is the space of $n$ distinct, labelled points in $\CC$. The space $\pconf_n(\CC)$ has fundamental group $P_n$, the pure braid group, 
 and is a $K(\pi,1)$ for this group. Hence, the singular cohomology of $\pconf_n(\CC)$ is the same as the group cohomology of $P_n$. 
 This fact yields the  connection between $\conf_n(\FF_q)$ on the left hand side of \eqref{twistEq} and the character of the pure braid group cohomology.
 
%
%
\subsection {Cycle polynomials and pure braid group cohomology}\label{subsec:cycle-braid}

We express the coefficients of the cycle polynomials $N_\lambda(z)$ in terms of the characters $h_n^k$ as an application of Theorem \ref{GrothLef}.
Theorem \ref{cycle-coeffs} is equivalent to Lehrer's \cite[Theorem 5.5]{Lehrer:1987} by comparing numerators and making a slight change of variables.

\begin{thm}\label{cycle-coeffs}
Let $\lambda$ be a partition of $n$,
then
\[
    N_\lambda(z) = \frac{1}{z_\lambda}\sum_{k=0}^n{(-1)^k h_n^k(\lambda) z^{n-k}},
\]
where $h_n^k$ is the character of the $S_n$-representation $H^k(P_n,\QQ)$.
\end{thm}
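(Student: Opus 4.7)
My plan is to apply the Twisted Grothendieck--Lefschetz formula (Theorem \ref{GrothLef}) to a carefully chosen character polynomial that isolates the cycle polynomial $N_\lambda(z)$. The key observation is that the inner products $\langle P, h_n^k\rangle$ appearing on the right-hand side of \eqref{twistEq} should, after the right choice of $P$, produce precisely the values $h_n^k(\lambda)$ up to the normalization $1/z_\lambda$.

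First I would introduce, for a partition $\lambda$ of $n$, the character polynomial
\[
P_\lambda(x_1, x_2, \ldots) := \prod_{j \ge 1} \binom{x_j}{m_j(\lambda)}.
\]
The plan is then to verify the combinatorial identity $P_\lambda(\mu) = \delta_{\lambda,\mu}$ for every partition $\mu$ of $n$. Indeed, $P_\lambda(\mu) = \prod_j \binom{m_j(\mu)}{m_j(\lambda)}$ is nonzero only if $m_j(\mu) \ge m_j(\lambda)$ for every $j$; combined with $\sum_j j\, m_j(\mu) = n = \sum_j j\, m_j(\lambda)$, this forces $m_j(\mu) = m_j(\lambda)$ for all $j$, so $\mu = \lambda$ and the value is $1$. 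Consequently, grouping polynomials in $\conf_n(\FF_q)$ by factorization type,
\[
\sum_{f \in \conf_n(\FF_q)} P_\lambda(f) \;=\; \sum_{\mu \vdash n} N_\mu(q)\, P_\lambda(\mu) \;=\; N_\lambda(q).
\]

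Next I would substitute $P_\lambda$ into the twisted Grothendieck--Lefschetz formula \eqref{twistEq}, yielding
\[
N_\lambda(q) \;=\; \sum_{k=0}^{n} (-1)^k \langle P_\lambda, h_n^k\rangle\, q^{n-k}
\]
for every prime power $q$. Since both sides are polynomials in $q$ that agree on infinitely many values, the identity lifts to an identity in $\QQ[z]$. The remaining step is to evaluate the inner product. Viewing $P_\lambda$ as a class function on $S_n$ via $P_\lambda(g) := P_\lambda([g])$, the delta-function property established above gives $P_\lambda(g) = \mathbf{1}_{C_\lambda}(g)$, so
\[
\langle P_\lambda, h_n^k\rangle \;=\; \frac{1}{n!} \sum_{g \in C_\lambda} h_n^k(g) \;=\; \frac{|C_\lambda|}{n!}\, h_n^k(\lambda) \;=\; \frac{1}{z_\lambda}\, h_n^k(\lambda),
\]
using $|C_\lambda| z_\lambda = n!$. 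Substituting back yields the claimed formula.

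The conceptually substantive step is the choice of $P_\lambda$ and the verification that it acts as a Kronecker delta on $\part(n)$; once this is in hand, the rest of the argument is a direct application of Theorem \ref{GrothLef} together with the identification $|C_\lambda|/n! = 1/z_\lambda$. The main potential obstacle is a notational one: one must be careful to distinguish $P_\lambda$ as a polynomial in the variables $x_j$ (whose evaluation at a partition plugs in multiplicities $m_j$) from $P_\lambda$ as a class function on $S_n$, and to ensure that the Grothendieck--Lefschetz identity is being applied with this class function rather than with some naive symmetrization.
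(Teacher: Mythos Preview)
Your proposal is correct and follows essentially the same argument as the paper's proof: both introduce the character polynomial $\prod_{j\ge 1}\binom{x_j}{m_j(\lambda)}$, verify it acts as a Kronecker delta on $\part(n)$, apply the twisted Grothendieck--Lefschetz formula (Theorem~\ref{GrothLef}), compute the inner product as $h_n^k(\lambda)/z_\lambda$, and pass from prime powers $q$ to the polynomial identity. Your verification of the delta property is slightly more detailed than the paper's, but the argument is otherwise identical.
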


\begin{proof}
Define the character polynomial $1_\lambda(x) \in \QQ[x_j : j \geq 1]$ by
\[
	1_\lambda(x) = \prod_{j\geq 1}{\binom{x_j}{m_j(\lambda)}}.
\]
Observe that for a partition $\mu \in \part(n)$ we have
\[
	1_\lambda(\mu) = \begin{cases} 1 & \text{if }\mu = \lambda,\\ 0 & \text{otherwise.}\end{cases}
\]
Therefore,
\[
    N_\lambda(q) = \sum_{f\in \conf_n(\FF_q)}{1_\lambda(f)}.
\]
On the other hand, by Theorem \ref{GrothLef} we have
\[
    \sum_{f\in \conf_n(\FF_q)}{1_\lambda(f)} = \sum_{k=0}^n{(-1)^k \big\langle 1_\lambda, h_n^k\big\rangle q^{n-k} }.
\]
If $g \in S_n$, let $[g] \in \part(n)$ be the partition given by the cycle lengths of $g$. Thus,
\[
    \big\langle 1_\lambda, h_n^k\big\rangle = \frac{1}{n!}\sum_{g \in S_n}{1_\lambda(g)h_n^k(g)} = 
    \frac{1}{n!}\sum_{\substack{g \in S_n\\ [g]=\lambda}}{h_n^k(g)} = \frac{c_\lambda}{n!}h_n^k(\lambda) = 
    \frac{1}{z_\lambda}h_n^k(\lambda).
\]
Therefore the identity
\[
	N_\lambda(q) = \frac{1}{z_\lambda}\sum_{k=0}^n{(-1)^k h_n^k(\lambda) q^{n-k}}
\]
holds for all prime powers $q$, giving the identity as polynomials in $\QQ[z]$.
\end{proof}
\begin{remark}
A recent result of Chen \cite[Theorem 1]{Chen:2016} also yields the identity in Theorem \ref{cycle-coeffs} by specializing at $t=0$. 
\end{remark}

One can explicitly  compute $h_n^k(\lambda)$ using Theorem \ref{cycle-coeffs}
by expanding the formula \eqref{eq:11} for $N_\lambda(z)$ and comparing coefficients. 
Lehrer \cite{Lehrer:1987} derives several corollaries this way. 
Here we give further  examples intended to explore possible connections with number theory.
We  obtain  restrictions  on the support of $h_n^k$ in Proposition \ref{prop:support}.  Then we
compute  values of $h_n^k(\lambda)$  in Sections \ref{subsec:application-1} and \ref{subsec:application-2}.
For any fixed $k$, the $h_n^k$ are given by character polynomials, while $h_n^{n-k}$ for $k < 2n/3$ exhibit interesting arithmetic structure.

%
%
\subsection{Support restrictions on  characters  $h_n^k$ }\label{subsec:application-0}

The character $h_n^k$ is supported on partitions with at least one small part, while  $h_n^{n-k}$ is supported on partitions
having at most $k$ different parts. The latter are  {\em multi-rectangular Young diagrams} having at most $k$ steps,
using the terminology of Do\l ega et al. \cite[Sect. 1.7]{DolegaFS:2010} and \'{S}niady \cite{Sniady:2014}.

\begin{prop}\label{prop:support}
Let $0 \le k \le n$  and $h_n^k$ be the character of the $S_n$-representation $H^k(P_n, \QQ)$, then

(1) $h_n^k$ is supported on partitions  having at least one part of size
at most $2k$. The value $h_n^k(\lambda)$ is determined by $m_j(\lambda)$ for $1 \le j \leq 2k$.

(2) $h_n^{n-k}$  is supported on multi-rectangular partitions $\lambda$ having at most $k$ distinct values of $j$
with  $m_j(\lambda)>0$.
\end{prop}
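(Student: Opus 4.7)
The plan is to use Theorem~\ref{cycle-coeffs}, which expresses $h_n^k(\lambda) = (-1)^k z_\lambda\, [z^{n-k}] N_\lambda(z)$, and translate both support assertions into vanishing statements for coefficients of the cycle polynomial $N_\lambda(z) = \prod_j \binom{M_j(z)}{m_j}$.

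For part (1) (assuming $k \ge 1$), the key ingredient is a ``gap near the top'' in each factor. Because $M_j(z) = \frac{1}{j}\sum_{d\mid j} \mu(d) z^{j/d}$ is supported only in degrees $j/d$ with $d \mid j$, its leading term is $z^j/j$ and its next nonzero degree is at most $j/p_j$, where $p_j$ is the smallest prime divisor of $j$. Expanding $\binom{M_j(z)}{m_j} = \frac{1}{m_j!}\sum_\ell s(m_j,\ell) M_j(z)^\ell$ via Stirling numbers of the first kind, only the $\ell = m_j$ summand can contribute to degrees exceeding $j(m_j-1)$, so the coefficient of $z^{jm_j - b}$ in $\binom{M_j(z)}{m_j}$ vanishes for every $0 < b < j - j/p_j$. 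Since $j - j/p_j \ge j/2$ for all $j \ge 2$, this gives a clean uniform gap bound.

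Now suppose every part of $\lambda$ exceeds $2k$, so $m_j = 0$ for $j \le 2k$. A contribution to $[z^{n-k}]$ in $N_\lambda(z) = \prod_{j > 2k}\binom{M_j(z)}{m_j}$ corresponds to a choice of gaps $b_j \ge 0$ with $\sum_j b_j = k$; but for $j > 2k$ any positive $b_j$ must satisfy $b_j \ge j/2 > k$, which is impossible. Hence $h_n^k(\lambda) = 0$. The same gap argument applied to the general splitting $N_\lambda(z) = \bigl(\prod_{j\le 2k}\bigr)\cdot\bigl(\prod_{j > 2k}\bigr)$ forces $b_j = 0$ for all $j > 2k$, so those factors contribute only their leading coefficient $\prod_{j > 2k} 1/(j^{m_j} m_j!)$; multiplication by $z_\lambda = \prod_j j^{m_j} m_j!$ then cancels the $m_j$ for $j > 2k$, leaving $h_n^k(\lambda)$ a function of $m_j$ for $1 \le j \le 2k$ only.

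Part (2) reduces to a short divisibility argument. Since each $M_j(z)$ has vanishing constant term, it is divisible by $z$, and so every factor $\binom{M_j(z)}{m_j}$ with $m_j \ge 1$ is divisible by $z$. Thus $N_\lambda(z)$ is divisible by $z^{d(\lambda)}$, where $d(\lambda) := \#\{j : m_j(\lambda) \ge 1\}$ is the number of distinct part sizes of $\lambda$; if $d(\lambda) > k$ then $[z^k] N_\lambda(z) = 0$, and Theorem~\ref{cycle-coeffs} gives $h_n^{n-k}(\lambda) = 0$. The main technical obstacle is the gap property in (1), which requires careful bookkeeping of how the divisor-indexed exponents of $M_j(z)^\ell$ interact across the Stirling expansion; once the bound $j - j/p_j \ge j/2$ is in place, everything else in (1) is elementary degree counting, and (2) is essentially immediate.
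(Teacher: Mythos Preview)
Your proposal is correct and follows the same approach as the paper: both proofs invoke Theorem~\ref{cycle-coeffs} to convert the statements into coefficient-vanishing for $N_\lambda(z)$, use the degree gap $\deg\bigl(M_j(z)-\tfrac{1}{j}z^j\bigr)\le \lfloor j/2\rfloor$ for part~(1), and use $z\mid M_j(z)$ for part~(2). Your version is considerably more detailed---you make explicit the Stirling expansion of $\binom{M_j(z)}{m_j}$, the precise gap bound $j-j/p_j\ge j/2$, and the cancellation of the $j>2k$ leading coefficients against $z_\lambda$ to get the ``determined by $m_j$ for $j\le 2k$'' clause---whereas the paper compresses part~(1) to a few lines and leaves these steps implicit.
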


\begin{proof}
(1) Theorem \ref{cycle-coeffs} implies $h_n^k(\lambda)$ is nonzero iff
the coefficient of $z^{n-k}$ in $N_{\lambda}(z)$ is nonzero.
The degree of $M_j(z) - \frac{1}{j}z^j$ is at most $\lfloor j/2 \rfloor$. Hence if $j > 2k$, then the coefficient of $z^{n-k}$ in $\Big( {M_j(z) \atop m_j(\lambda)} \Big)$ is zero.
Thus the only $j$ contributing to the coefficient of $z^{n-k}$ in $N_{\lambda}(z)$ in \eqref{eq:11} are those with $1 \le j\leq 2k$.

(2) Theorem \ref{cycle-coeffs} implies $h_n^{n-k}(\lambda)$ is nonzero iff the coefficient of $z^k$ in $N_\lambda(z)$ is nonzero. If $m_j(\lambda) > 0$, then $z$ divides $\Big( {M_j(z) \atop m_j(\lambda)} \Big)$. Hence if $m_j(\lambda) > 0$ for more than $k$ values of $j$, then $h_n^{n-k}(\lambda) = 0$.
\end{proof}

\begin{remark}
{Property (1) is a manifestation of representation stability of $h_n^k$, which says that for fixed $k$ and all sufficiently large $n$,
the values of  $h_n^k(\lambda)$ are described by
a character polynomial in $\lambda$. A {\em character polynomial} for a partition $\lambda=(1^{m_1}2^{m_2} \cdots n^{m_n})$ is a polynomial in 
the variables $m_j$, see Example  \ref{h1-h2-example}. 
 Farb \cite{Farb:ICM2014} raised the problem  of explicitly determining  such  character polynomials.}
Proposition \ref{prop:support} bounds which variables $m_j$ may occur in the character polynomial for $h_n^k$.
A known sharp representation stability property  of $h_n^k$ is that it equals such a character polynomial for all $n \ge 3k+1$,
as shown in \cite[Theorem 1.1]{HerRein:2015}, taking dimension $d=2$. 
\end{remark}

%
%
\subsection{Character values $h_n^k(\lambda)$ for fixed $\lambda$ and varying $k$}\label{subsec:application-1}
We  give special cases of  explicit determinations for  $h_n^k(\lambda)$ for various fixed $\lambda$ and varying $k$ by 
directly expanding the cycle polynomial $N_\lambda(z)$.

\begin{example}[Dimensions of cohomology]
The dimension of $H^k(P_n,\QQ)$ is the value of $h_n^k$ at the identity element, corresponding to the partition $(1^n)$. Since $M_1(z) = z$ and the centralizer of the identity in $S_n$ has order $z_{(1^n)} = n!$, we have
\[
	N_{(1^n)}(z) = \binom{z}{n} = \frac{1}{n!}\prod_{i=0}^{n-1}(z - i) = \frac{1}{n!}\sum_{k=0}^n{(-1)^k{n \brack n-k}z^{n-k}},
\]
where ${n \brack n-k}$ is an  
{\em unsigned Stirling number of the first kind}. Theorem \ref{cycle-coeffs} says
\[
	N_{(1^n)}(z) = \frac{1}{n!}\sum_{k=0}^n{(-1)^k h_n^k\big((1^n)\big) z^{n-k}}.
\]
Comparing coefficients recovers the well-known formula due to  Arnol'd \cite{Arn:1969}
for the dimension of the pure braid group cohomology:
\[
	\mathrm{dim}\,H^k(P_n,\QQ) = h_n^k\big((1^n)\big) = {n \brack n-k}.
\]
These values are given in Table \ref{betti}.
\end{example}

\begin{table}[h]
\renewcommand{\arraystretch}{0.8}
\begin{center}
	\begin{tabular}{| c | r |r | r| r | r |r |r|r|r|r|r|}
	\hline
	$n$ $\backslash$ $k$ &  $0$ & $1$ & $2$ & $3$ & $4$ & $5$ & $6$  & $7$ & $8$  \\ \hline
	$1$ & $1$ &$0$ & $0$ & $0$ & $0$ & $0$ & $0$ & $0$ & $0$  \\
        $2$ &$1$ & $1$ & $0$ & $0$ & $0$ & $0$ & $0$ & $0$ & $0$  \\
        $3$ & $1$ &$3$ & $2$ & $0$ & $0$ & $0$ & $0$ & $0$ & $0$  \\
	$4$ & $1$ &$6$ &$11$ & $6$ & $0$ & $0$ & $0$ & $0$ & $0$  \\
	$5$ & $1$ &$10$ &$35$ & $50$ & $24$ & $0$ & $0$    & $0$ & $0$  \\
	$6$ & $1$ &$15$ &$85$ & $225$ & $274$ & $120$ & $0$        & $0$ & $0$ \\
	$7$ & $1$ &$21$ &$175$ & $735$ & $1624$ & $1764$  & $720$ & $0$ & $0$ \\
        $8$ & $1$ &$28$ &$322$ & $1960$ & $6769$ & $13132$   & $13068$ & $5040$ & $0$  \\
        $9$ & $1$ &$36$ &$546$ & $4536$ & $22449$ & $67284$ & $118124$   & $109584$ & $40320$\\ 
	\hline
	\end{tabular}
	\medskip
	\caption{Betti numbers of pure braid group cohomology $H^k(P_n, \QQ)$.}
	\label{betti}
\end{center}
\end{table}

\begin{example}
The partition $\lambda = [n]$ corresponds to an $n$-cycle in $S_n$. The centralizer of an $n$-cycle has order $z_{[n]} = n$ and
\begin{equation}\label{first-exp}
	N_{[n]}(z) = \binom{M_n(z)}{1} = M_n(z) = \frac{1}{n}\sum_{d\mid n}{\mu(d) z^{n/d}}.
\end{equation}
Theorem \ref{cycle-coeffs} gives us
\begin{equation}\label{second-exp}
	N_{[n]}(z) = \frac{1}{n}\sum_{k=0}^n{(-1)^kh_n^k\big([n]\big)z^{n-k}}.
\end{equation}
 Comparing coefficients, we find that
   \begin{equation*}\label{eqn:304}
   h_n^{n-k}\big([n]\big) = 
 \left\{
\begin{array}{cl}
(-1)^{n-k} \mu(\frac{n}{k})  & \mbox{if}~ k \mid n, \\
~~~ \\
0 & \mbox{if} ~~k\nmid n.
\end{array}
\right.
 \end{equation*}

 \end{example}

\subsection{Character values $h_n^k(\lambda)$ for fixed $k$ and varying $\lambda$}\label{subsec:application-2}
We now compute $h_n^k(\lambda)$ for fixed $k$ and varying $\lambda$.

\begin{example}[Computing $h_n^0$ and $h_n^n$]\label{h0 example}
 The cases $k=0$ and $n$ are both constant: $h_n^0 = 1$ and $h_n^n = 0$. 
The leading coefficient of $N_\lambda(z)$ is $1/z_\lambda$, hence Theorem \ref{cycle-coeffs} tells us $h_n^0(\lambda) = 1$ for all $\lambda$. For $j\geq 1$, we have $z \mid M_j(z)$, from which it follows that $z \mid N_\lambda(z)$ for all partitions $\lambda$ of $n \geq 1$. In other words, for all $m_j \geq 1$
\[
    \frac{1}{z_\lambda}(-1)^n h_n^n(\lambda) = N_\lambda(0) = 0.
\]
Thus $h_n^n(\lambda) = 0$ for all $\lambda$, and $H^n(P_n, \QQ) =0.$
\end{example}

\begin{example}[Computing $h_n^1$ and $h_n^2$] \label{h1-h2-example}
 Taking $\lambda= ( 1^{m_1} 2^{m_2} \cdots)$,
a careful analysis of the $z^{n-1}$ and $z^{n-2}$ coefficients in $N_\lambda(z)$ and Theorem \ref{cycle-coeffs} yields the following formulas
\begin{align*}
h_n^1(\lambda) &= \binom{m_1}{2} + \binom{m_2}{1}\\
h_n^2(\lambda) &= 2\binom{m_1}{3} + 3\binom{m_1}{4} +\binom{m_1}{2}\binom{m_2}{1}-\binom{m_2}{2} - \binom{m_3}{1} - \binom{m_4}{1},
\end{align*}
where $m_j = m_j(\lambda)$. 
These formulas represent $h_n^1$ and $h_n^2$ as character polynomials, and they appear
in \cite[Lemma 4.8]{CEF:2014}.
Note that  $h_n^1(\lambda) = h_n^2(\lambda) = 0$ for partitions $\lambda$ having all parts larger than 2 and 4 respectively,
 illustrating  Proposition \ref{prop:support}  (1).  
\end{example}

\begin{example}[Computing $h_n^{n-1}$]
The $z$ coefficient of $N_\lambda(z)$ determines the value of $h_n^{n-1}(\lambda)$. Since each $j$ with $m_j(\lambda)>0$ contributes a factor of $z$ to $N_\lambda(z)$, $h_n^{n-1}$ is supported on partitions of the form $\lambda = (j^{m})$. Note that the $z$ coefficient of the necklace polynomial $M_j(z)$ is $\mu(j)/j$. Let $\lambda = (j^{m})$, then the $z$ coefficient of
\[
    N_\lambda(z) = \binom{M_j(z)}{m} = \frac{M_j(z)(M_j(z) - 1)\cdots(M_j(z) - m + 1)}{m!}
\]
is $(-1)^{m -1}\frac{\mu(j)}{j m}$. Since $z_\lambda = j^{m}m!$, we conclude
\begin{equation*}\label{n-1}
    h_n^{n-1}(\lambda) = 
    \left\{
\begin{array}{cl}
(-1)^{m - n}\mu(j) j^{m - 1} (m - 1)!  & \mbox{if}~ \lambda= (j^{m}), \\
~~~ \\
0 & \mbox{otherwise}.
\end{array}
\right.
 \end{equation*}
 By \cite[Corollary $(5.5)^{' {} '}$]{Lehrer:1987} $h_n^{n-1} = \Sign_n \otimes \Ind_{c_{n}}^{S_n}(\zeta_n),$
 where $c_n$ is a cyclic group of order $n$ and $\zeta_n$ is a faithful character on it, 
 noted earlier by Stanley \cite{Stanley:1982}.
\end{example}

\begin{example}[Computing $h_n^{n-2}$]
The $z^2$ coefficient of $N_\lambda(z)$ determines $h_n^{n-2}(\lambda)$.
Proposition \ref{prop:support} (2)
tells us that $h_n^{n-2}(\lambda) = 0$ when $m_j(\lambda) > 0$ for at least three $j$. We
treat the two remaining cases $\lambda = (i^{m_i} j^{m_j})$ and $\lambda = ( j^{m})$ in turn.
		If $\lambda = (i^{m_i}j^{m_j})$, then the  $z$ coefficient of $\Big( {M_i(z) \atop m_i} \Big)$ is $(-1)^{m_i -1}\frac{\mu(i)}{i m_i}$, 
		and similarly for $\Big( {M_j(z) \atop m_j} \Big)$. 
		We have $z_\lambda = (i^{m_i} m_i!)(j^{m_j}m_j!)$. Thus, by Theorem \ref{cycle-coeffs}
		\begin{align*}
			h_n^{n-2}\big( (i^{m_1} j^{m_j}) \big)&= (-1)^{m_i + m_j - n} z_\lambda \frac{\mu(i)\mu(j)}{(i m_i )(jm_j)}\\
			 &= (-1)^{m_i + m_j - n} \big(\mu(i)i^{m_i -1}(m_i -1)!\big)\big(\mu(j) j^{m_j-1} (m_j-1)!\big).
		\end{align*}

		If $\lambda = (j^{m})$, then
		the $z^2$ coefficient of $N_\lambda(z)$ receives a contribution of $(-1)^{m-1}\frac{\mu(j/2)}{j m}$ from the quadratic term of $M_j(z)$ if $j$ is even. 
		The $z$ coefficient of 
		$\Big({M_j(z) \atop m_j}\Big)/M_j(z)$ is 
		\[
			\frac{\mu(j)}{j m!}\left( \sum_{i=1}^{m-1}{\frac{(-1)^{m-2}(m-1)!}{i}} \right) = (-1)^{m} \frac{\mu(j)}{j m} H_{m -1},
		\]
		where  $H_{m-1}=\sum_{i=1}^{m-1} \frac{1}{i}$ denotes the {\em $(m-1)$th harmonic number}. The $z$ coefficient of $M_j(z)$ is $\frac{\mu(j)}{j}$.
		 Using the convention that the M\"{o}bius function  $\mu(\alpha) $ vanishes at non-integral $\alpha$, 
		  we arrive at the following expression for $h_n^{n-2}(\lambda)$:
		\begin{align*}
			h_n^{n-2}\big( (j^{m}) \big) &= z_\lambda (-1)^{m - n}\frac{\big(\mu(j)^2 H_{m-1} - \mu(\frac{j}{2})\big)}{jm}\\
			&= (-1)^{m - n}\big(\mu(j)^2 H_{m-1} - \mu(\tfrac{j}{2})\big)j^{m - 1} (m -1)!.
		\end{align*}
\end{example}

%
%

 \section{Submodules  $A_n^k$ of Pure Braid Group Cohomology}\label{sec:braid-group}
 
 Starting from Arnol'd's presentation for the $S_n$-algebra $H^\bullet(P_n,\QQ)$ we obtain a decomposition $H^k (P_n, \QQ) = A_n^{k-1} \oplus A_n^k$ of $S_n$-modules.
 The characters of the sequence $A_n^k$ of $S_n$-modules determine the splitting measure coefficients $\alpha_n^k(C_\lambda)$.
 In Section \ref{sec:43a} we interpret $A_n^\bullet$ as the cohomology of $\pconf_n(\CC)/\CC^\times$, where $\CC^\times$ acts freely on $\pconf_n(\CC)$ by scaling coordinates. 
 
%
%
\subsection{Presentation of pure braid group cohomology ring}\label{sec:40}

Arnol'd \cite{Arn:1969} gave the following presentation of the cohomology ring $H^{\bullet}(P_n,\QQ)$ of the pure braid group $P_n$ as an $S_n$-algebra.
\begin{thm}[Arnol'd] \label{arnold}
There is an isomorphism of graded $S_n$-algebras
\[
    H^{\bullet}(P_n,\QQ) \cong \Lambda^{\bullet}[\omega_{i,j}]/\langle R_{i,j,k} \rangle,
\]
where $1 \leq i,j,k \leq n$ are distinct, $\omega_{i,j} = \omega_{j,i}$ have degree $1$, and
\[
    R_{i,j,k} = \omega_{i,j}\wedge\omega_{j,k} + \omega_{j,k}\wedge\omega_{k,i} + \omega_{k,i}\wedge\omega_{i,j}.
\]
An element $g \in S_n$ acts on $\omega_{i,j}$ by $g\cdot \omega_{i,j} = \omega_{g(i),g(j)}$.
\end{thm}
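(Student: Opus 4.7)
The plan is to realize $H^\bullet(P_n,\QQ)$ geometrically as the singular cohomology of the configuration space $\pconf_n(\CC)$ and to produce the generators $\omega_{i,j}$ as explicit differential forms. Because $\pconf_n(\CC)$ is a $K(P_n,1)$, group cohomology coincides with singular cohomology, so the $S_n$-action by permutation of coordinates on $\pconf_n(\CC)$ induces the desired $S_n$-action on $H^\bullet(P_n,\QQ)$. For each pair $i\neq j$, set
\[
    \omega_{i,j} \;=\; \frac{1}{2\pi\sqrt{-1}}\,\frac{d(z_i-z_j)}{z_i-z_j},
\]
a closed holomorphic $1$-form on $\pconf_n(\CC)\subset \CC^n$ (the complement of the braid arrangement). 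By construction $\omega_{i,j}=\omega_{j,i}$, they have degree one, and the permutation action sends $\omega_{i,j}$ to $\omega_{g(i),g(j)}$.

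Next I would verify the defining relations in two steps. The Arnol'd relation $R_{i,j,k}$ is a straightforward computation: expanding $\omega_{i,j}\wedge\omega_{j,k}+\omega_{j,k}\wedge\omega_{k,i}+\omega_{k,i}\wedge\omega_{i,j}$ and clearing the common denominator $(z_i-z_j)(z_j-z_k)(z_k-z_i)$ one obtains the partial-fraction identity, which vanishes identically on three coordinates. Antisymmetry of the wedge product gives the exterior algebra relations. This produces an $S_n$-equivariant algebra map
\[
    \Phi:\; \Lambda^\bullet[\omega_{i,j}]/\langle R_{i,j,k}\rangle \;\longrightarrow\; H^\bullet(\pconf_n(\CC),\QQ).
\]

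The heart of the argument, and the main obstacle, is proving $\Phi$ is an isomorphism. For this I would use the Fadell--Neuwirth fibration
\[
    \pconf_n(\CC) \;\longrightarrow\; \pconf_{n-1}(\CC), \qquad (z_1,\dots,z_n)\mapsto(z_1,\dots,z_{n-1}),
\]
whose fiber $F_{n-1}=\CC\smallsetminus\{n-1 \text{ points}\}$ is homotopy equivalent to a wedge of $n-1$ circles. The classes $\omega_{1,n},\dots,\omega_{n-1,n}$ restrict on each fiber to the standard basis of $H^1(F_{n-1},\QQ)$. Since the fiber cohomology is an exterior algebra on these classes (and vanishes above degree one), the Leray--Hirsch theorem applies and yields a free module decomposition
\[
    H^\bullet(\pconf_n(\CC),\QQ) \;\cong\; H^\bullet(\pconf_{n-1}(\CC),\QQ)\otimes \Lambda^\bullet[\omega_{1,n},\dots,\omega_{n-1,n}].
\]
By induction on $n$, the Poincar\'e polynomial of $H^\bullet(\pconf_n(\CC),\QQ)$ is therefore $\prod_{k=1}^{n-1}(1+kt)$, matching $\sum_k {n\brack n-k}t^k$.

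Finally, I would show that the Arnol'd presentation has the same Poincar\'e polynomial, by producing an explicit $\QQ$-basis indexed by "falling" sequences of pairs (monomials $\omega_{i_1,j_1}\wedge\cdots\wedge \omega_{i_r,j_r}$ with $j_1<j_2<\cdots<j_r$ and $i_s<j_s$): the relation $R_{i,j,k}$ lets one reduce any monomial to such a normal form, so these monomials span; and the inductive fibration/Leray--Hirsch calculation shows their images in $H^\bullet(\pconf_n(\CC),\QQ)$ are linearly independent, because monomials involving only $\omega_{\ast,n}$ are sent precisely to the Leray--Hirsch basis on the fiber direction. Equality of total dimensions then forces $\Phi$ to be an isomorphism. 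Since every step is manifestly $S_n$-equivariant, the resulting isomorphism respects the $S_n$-structure, completing the proof. The technical crux lies in checking that the "falling" monomials form a basis for the presented algebra via the straightening rules coming from $R_{i,j,k}$, which must be balanced with the dimension count from the fibration.
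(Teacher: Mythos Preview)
The paper does not give its own proof of this theorem; it simply cites Arnol'd's 1969 paper and then uses the result. Your proposal is essentially a correct reconstruction of Arnol'd's original argument: define the logarithmic $1$-forms $\omega_{i,j}=\frac{1}{2\pi i}\,d\log(z_i-z_j)$ on $\pconf_n(\CC)$, verify the relation $R_{i,j,k}$ via the partial-fraction identity $\frac{1}{ab}+\frac{1}{bc}+\frac{1}{ca}=0$ when $a+b+c=0$, and then use the Fadell--Neuwirth fibration together with Leray--Hirsch to match dimensions. The ``falling'' monomial basis you propose is exactly the basis $\cB_k$ that the paper itself invokes (citing Arnol'd, Cor.~3) in the proof of Proposition~\ref{prop:exact-lemma}, so your normal-form step aligns with what the paper takes for granted.

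One small correction of phrasing: the fiber $F_{n-1}\simeq\bigvee^{n-1}S^1$ has cohomology concentrated in degrees $0$ and $1$, so $H^\bullet(F_{n-1},\QQ)$ is not the full exterior algebra $\Lambda^\bullet[\omega_{1,n},\dots,\omega_{n-1,n}]$ but only its truncation $\QQ\oplus\QQ\{\omega_{1,n},\dots,\omega_{n-1,n}\}$. You do note the vanishing above degree one, so the Leray--Hirsch conclusion $H^\bullet(\pconf_n)\cong H^\bullet(\pconf_{n-1})\otimes(\QQ\oplus\QQ^{n-1})$ and the resulting Poincar\'e polynomial $\prod_{k=1}^{n-1}(1+kt)$ are correct; just be careful not to write the tensor factor as a full exterior algebra. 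With that adjustment, the dimension count plus surjectivity of $\Phi$ (which follows inductively from Leray--Hirsch, since the $\omega_{i,j}$ with $j<n$ generate $H^\bullet(\pconf_{n-1})$ and the $\omega_{i,n}$ generate the fiber direction) forces $\Phi$ to be an isomorphism, as you say.
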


In what follows, we identify $H^{\bullet}(P_n,\QQ)$ with this presentation as a quotient of an exterior algebra. 
The ring $\Lambda^{\bullet}[\omega_{i,j}]/\langle R_{i,j,k} \rangle$ is an example of an \emph{Orlik-Solomon algebra},
 which arise as cohomology rings of complements of hyperplane arrangements 
 (see  Orlik and Solomon \cite{OrlikS:1980}, Dimca and Yuzvinsky \cite{DY:2010}, and Yuzvinsky \cite{Yuzvinsky:2001}.)

%
%
\subsection{$S_n$-modules $A_n^k$ inside  braid group cohomology} \label{sec:N41}
Let $\tau = \sum_{1\leq i< j \leq n}{\omega_{i,j}} \in H^1(P_n, \QQ)$. 
The element  $\tau$ generates a trivial $S_n$-subrepresentation of $H^1(P_n,\QQ)$. 
We define maps $d^k: H^k(P_n,\QQ) \rightarrow H^{k+1}(P_n,\QQ)$ for each $k$ by $\nu \mapsto \nu \wedge \tau$. This map is
 linear and $S_n$-equivariant, since 
\[
    g \cdot d^k(\nu) = g\cdot (\nu \wedge \tau) = (g\cdot \nu) \wedge (g \cdot \tau) = (g\cdot \nu) \wedge \tau = d^k(g\cdot \nu).
\]
From $d^{k+1}\circ d^k = 0$ we conclude that
\begin{equation*}
    0 \rightarrow H^0(P_n, \QQ) \xrightarrow{d^0} H^1(P_n, \QQ) \xrightarrow{d^1} \cdots \xrightarrow{d^{n-1}} H^n(P_n, \QQ) \xrightarrow{d^n} 0
\end{equation*}
is a chain complex of $S_n$-representations. It follows from the general theory of Orlik-Solomon algebras that the above sequence 
is exact \cite[Thm. 5.2]{DY:2010}. 
We include a proof in this case for completeness. 

\begin{prop}\label{prop:exact-lemma}
In the above notation,
\begin{equation} \label{Exact}
    0 \rightarrow H^0(P_n, \QQ) \xrightarrow{d^0} H^1(P_n, \QQ) \xrightarrow{d^1} \cdots \xrightarrow{d^{n-1}} H^n(P_n, \QQ) \xrightarrow{d^n} 0
\end{equation}
is an exact sequence of $S_n$-representations.  Set  $A_n^k := \im(d^k)
\subset H^{k+1}(P_n, \QQ)$.
Hence we have an isomorphism of $S_n$-representations for each $k$,
\begin{equation*}\label{k-split}
	H^k(P_n, \QQ) \cong A_n^{k-1} \oplus A_n^{k}.
\end{equation*}
\end{prop}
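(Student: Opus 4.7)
The plan is to prove exactness by constructing a single degree $-1$ anti-derivation on $H^\bullet(P_n, \QQ)$ that produces, for every cocycle, an explicit preimage under wedging with $\tau$. The chain complex property $d^{k+1}\circ d^k = 0$ is immediate from $\tau \wedge \tau = 0$ and needs no further comment, so the real content is exactness; the direct sum decomposition will drop out by splitting the resulting short exact sequences, which is automatic over $\QQ$.

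First I would define $D : H^\bullet(P_n, \QQ) \to H^{\bullet-1}(P_n, \QQ)$ as the unique graded anti-derivation determined on Arnol'd's generators by $D(\omega_{i,j}) = 1$, satisfying the signed Leibniz rule $D(\alpha\wedge\beta) = D(\alpha)\wedge\beta + (-1)^{|\alpha|}\alpha\wedge D(\beta)$. The only thing to check is that $D$ descends through the Arnol'd ideal, which is the short computation
\[
D(R_{i,j,k}) = (\omega_{jk}-\omega_{ij}) + (\omega_{ki}-\omega_{jk}) + (\omega_{ij}-\omega_{ki}) = 0.
\]
In particular $D(\tau) = \binom{n}{2}$, which is nonzero for $n \geq 2$ (the case $n=1$ is vacuous, since $H^k(P_1,\QQ) = 0$ for $k \geq 1$).

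The main step is then the following. For $\alpha \in H^k(P_n, \QQ)$ with $k \geq 1$ and $d^k\alpha = \alpha \wedge \tau = 0$, the Leibniz rule gives
\[
0 = D(\alpha\wedge\tau) = D(\alpha)\wedge\tau + (-1)^k \binom{n}{2}\,\alpha,
\]
which rearranges to $\alpha = \tfrac{(-1)^{k+1}}{\binom{n}{2}}\, d^{k-1}(D\alpha) \in \im(d^{k-1})$, giving exactness at $H^k$. The endpoint $k=n$ is handled by the same identity, since there $\alpha \wedge \tau \in H^{n+1}(P_n,\QQ) = 0$ automatically; exactness at $H^0$ is injectivity of $d^0$, which holds because $\tau \neq 0$ in $H^1(P_n,\QQ)$ (the $\omega_{i,j}$ form a basis). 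Exactness then identifies $\ker d^k$ with $A_n^{k-1} = \im d^{k-1}$, so the short exact sequence $0 \to A_n^{k-1} \to H^k(P_n,\QQ) \to A_n^k \to 0$ of $S_n$-representations splits over $\QQ$ by Maschke, yielding the claimed decomposition.

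The hard part is justifying the existence of $D$ on the Orlik-Solomon quotient: the calculation $D(R_{i,j,k}) = 0$ is short, but one must be careful with the signs in the graded Leibniz rule when propagating $D$ off the generators. Once $D$ is in place, the exactness argument is essentially a single application of the Leibniz rule to a cocycle.
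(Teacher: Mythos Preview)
Your proof is correct and takes a genuinely different route from the paper. The paper argues combinatorially: it replaces Arnol'd's monomial basis $\cB_k$ of $H^k(P_n,\QQ)$ by a new basis $\cC_k = U_k \cup \{\omega\wedge\tau : \omega\in U_{k-1}\}$, where $U_k$ consists of those basis monomials not involving $\omega_{n-1,n}$, and then reads off exactness directly from the shape of this basis. Your argument is instead the standard Orlik--Solomon contracting homotopy: the anti-derivation $D$ with $D(\omega_{i,j})=1$ is precisely the boundary map $\partial$ of the Orlik--Solomon algebra, and the identity $D(\alpha\wedge\tau)=D(\alpha)\wedge\tau+(-1)^k\binom{n}{2}\alpha$ is the usual chain-homotopy relation showing acyclicity. (The paper even points to \cite[Thm.~5.2]{DY:2010} for this general fact before giving its own self-contained combinatorial proof.)

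What each approach buys: yours is shorter, coordinate-free, and makes the $S_n$-equivariance of the splitting transparent, since $D$ itself commutes with the $S_n$-action. The paper's basis $\cC_k$, on the other hand, gives an explicit vector-space splitting $H^k \cong U_k \oplus (U_{k-1}\wedge\tau)$ that is useful if one wants concrete representatives for $A_n^k$; the price is that this particular basis is \emph{not} $S_n$-stable (it singles out the pair $(n-1,n)$), so Maschke is still needed at the end, just as in your argument. Your only soft spot is the phrase ``one must be careful with the signs'': it would be cleaner to simply say that $D$ is interior multiplication by the linear functional $\omega_{i,j}\mapsto 1$ on $\Lambda^\bullet[\omega_{i,j}]$, which is automatically a well-defined anti-derivation, and then note (as you do) that $D(R_{i,j,k})=0$ forces $D$ to preserve the two-sided ideal $\langle R_{i,j,k}\rangle$ by the Leibniz rule.
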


\begin{proof}
Arnol'd \cite[Cor. 3]{Arn:1969} describes an additive basis $\cB_k$ for $H^k(P_n, \QQ)$ comprised of all simple wedge products
\begin{equation*}
    \omega_{i_1, j_1} \wedge \cdots \wedge \omega_{i_k, j_k}\text{ such that } i_s < j_s\text{ for each $s$, and }j_1 < j_2 < \ldots < j_k.
\end{equation*}
Let
\[
	U_k = \{ \omega_{i_1,j_1} \wedge \cdots \wedge \omega_{i_k,j_k} \in \cB_k : (i_s, j_s) \neq (n-1, n)\},
\]
for $k >0$ and $U_0 = \{1\}$. Then set
\[
	\cC_k = U_k \cup \{\omega \wedge \tau : \omega \in U_{k-1}\}.
\]

\noindent{\bf Claim.} {\em $\cC_k$ is a basis of $H^k(P_n,\QQ)$.}\\

For example, we have
\[
    \cC_1 = \{\omega_{i,j} : (i,j) \neq (n-1, n)\} \cup \{\tau\},
\]
which is clearly a basis for $H^1(P_n,\QQ)$.

To prove the claim, since $|\cB_k| = |\cC_k|$, it suffices to show $\cC_k$ spans $H^k(P_n, \QQ)$. Note that
\[
	\cB_k = U_k \cup \{\omega \wedge \omega_{n-1,n} : \omega \in U_{k-1}\},
\]
further reducing the problem to expressing $\omega \wedge \omega_{n-1,n}$ as a linear combination of $\cC_k$ for each $\omega \in U_{k-1}$.
 Given $\omega = \omega_{i_1,j_1} \wedge \cdots \wedge \omega_{i_{k-1},j_{k-1}} \in U_{k-1}$, we use the relation
\[
	\omega_{i_s,j}\wedge \omega_{i,j} = \omega_{i_s,i}\wedge\omega_{i,j} - \omega_{i_s,i}\wedge\omega_{i_s,j}
\]
to express $\omega \wedge \omega_{i,j}$ in terms of elements of $U_k$ as follows:
\[
	\omega \wedge \omega_{i,j} = \begin{cases}
	\pm \omega_{i_1,j_1}\wedge \cdots \wedge \omega_{i_s,j_s}\wedge \omega_{i,j} \wedge \omega_{i_{s+1},j_{s+1}} \wedge \cdots \wedge \omega_{i_{k-1},j_{k-1}}\\
	  \quad\quad\quad \quad\quad\quad\quad\quad \quad \mbox{for}  \quad j_s < j < j_{s+1},\\
	\pm \omega_{i_1,j_1} \wedge \cdots \wedge (\omega_{i_s,i}\wedge \omega_{i,j} - \omega_{i_s,i}\wedge \omega_{i_s,j}) \wedge \cdots \wedge \omega_{i_{k-1},j_{k-1}}\\
	  \quad\quad\quad \quad\quad\quad\quad\quad \quad \mbox{for}  \quad j_s = j, i_s \neq i,\\
	0  \quad\quad \quad\quad\quad \quad\quad\quad\,\,\, \mbox{for} \quad (i_s, j_s) = (i,j). 
	\end{cases}
\]
The first and third cases are easily seen to belong in the span of $U_k$. Since $i_s, i < j$ and $j$ does not occur twice as a largest subscript in
 $\omega$, we see inductively that the second case also belongs in the span of $U_k$. Therefore, 
 $\omega \wedge \tau = \omega \wedge \omega_{n-1,n} + \nu$, where $\nu$ is in the span of $U_k$. Hence $\omega\wedge \omega_{n-1,n} = \omega \wedge \tau -\nu$ is in the span of $\cC_k$ and we conclude that $\cC_k$ is a basis, proving the claim.

We now show the sequence \eqref{Exact} is exact. Suppose $\nu \in \ker(d^k)$. Express $\nu$ in the basis $\cC_k$ as 
\[
    \nu = \sum_{\omega \in U_k}{a_\omega\, \omega} + \sum_{\omega\in U_{k-1}}{b_\omega\, \omega}\wedge \tau.
\]
Then
\[
    0 = d^k(\nu) = \nu \wedge \tau = \sum_{\omega \in U_k}{a_\omega \, \omega \wedge \tau}.
\]
Since $\omega \wedge \tau$ is an element of the basis $\cC_{k+1}$ for each $\omega \in U_k$, we have $a_\omega= 0$. Hence, $\nu = \mu \wedge \tau = d^{k-1}(\mu)$ where
\[
    \mu = \sum_{\omega \in U_{k-1}}{b_\omega \, \omega},
\]
so $\ker(d^k)= \im(d^{k-1}).$
\end{proof}

Recall from Section  \ref{subsec:application-1} that the 
 dimension of $H^k(P_n, \QQ)$ is given by an unsigned Stirling number of the first kind
\[
	\dim\big(H^k(P_n,\QQ)\big)= {n \brack n - k},
\]
where the unsigned Stirling numbers are determined by the identity
$\prod_{k=0}^{n-1}{(x + k)} = \sum_{k=0}^{n-1}{{n \brack k} x^k}.$
The exact sequence in Proposition \ref{prop:exact-lemma} shows the  dimension of $A_n^k$ is
\[
	\dim(A_n^k) = \sum_{j=0}^k {(-1)^j {n \brack n - k + j}}.
\]
Table \ref{table4} gives values of $\dim(A_n^k)$ for small $n$ and $k$; here $\dim (A_n^{n-1})=0$ for $n \ge 2$.

\begin{table}[h]
\renewcommand{\arraystretch}{0.9}
\begin{center}
	\begin{tabular}{| c | r |r | r| r | r |r |r|r|r|r|}
	\hline
	$n$ $\backslash$ $k$ &  $0$ & $1$ & $2$ & $3$ & $4$ & $5$ & $6$  & $7$ \\ 
	\hline
       $1$ & $1$ &$0$ & $0$ & $0$ & $0$ & $0$ & $0$ & $0$\\
        $2$ & $1$ &$0$ & $0$ & $0$ & $0$ & $0$ & $0$ & $0$\\
	$3$ & $1$ &$2$ &$0$ & $0$ & $0$ & $0$ & $0$ & $0$\\
	$4$ & $1$ &$5$ &$6$ & $0$ & $0$ & $0$ & $0$ & $0$\\
	$5$ & $1$ &$9$ &$26$ & $24$ & $0$ & $0$ & $0$  & $0$ \\
	$6$ & $1$ &$14$ &$71$ & $154$ & $120$ & $0$  & $0$ & $0$\\
	$7$ & $1$ &$20$ &$155$ & $580$ & $1044$ & $720$   & $0$ & $0$\\
        $8$ & $1$ &$27$ &$295$ & $1665$ & $5104$ & $8028$   & $5040$ & $0$\\
        $9$ & $1$ &$35$ &$511$ & $4025$ & $18424$ & $48860$    & $69264$ & $40320$\\
	\hline
	\end{tabular}
	\medskip
	\caption{$\dim(A_n^k)$}
	\label{table4}
\end{center}
\end{table}

%
%

 \subsection{$A_n^k$ as cohomology of a  complex manifold with an $S_n$-action}\label{sec:43a}
 
Recall from Section \ref{subsec:twist}
that the pure configuration space $\pconf_n(\CC)$ is defined by
\[
	\pconf_n(\CC) = \{(z_1, z_2, \ldots, z_n) \in \CC^n : z_i \neq z_j \text{ when } i\neq j\}.
\]
It is an open complex manifold, and 
the symmetric group $S_n$ acts on $\pconf_n(\CC)$ by permuting coordinates. There is also a free action of $\CC^\times$ on $\pconf_n(\CC)$ defined by
\[
	c\cdot(z_1,z_2,\ldots, z_n) = (c z_1, c z_2, \ldots, c z_n).
\]
This action commutes with the $S_n$-action, hence it induces an action of $S_n$ on the quotient complex manifold $\pconf_n(\CC)/\CC^\times$. 
Therefore $H^\bullet(\pconf_n(\CC)/\CC^\times,\QQ)$ is an $S_n$-algebra. We now relate the graded components 
$H^k(\pconf_n(\CC)/\CC^\times, \QQ)$ to the $S_n$-submodules $A_n^k$ of 
$H^k(\pconf_n(\CC),\QQ) = H^k(P_n,\QQ)$ constructed in Proposition \ref{prop:exact-lemma}.

%
%
\begin{thm}\label{thm:equivariant}
Let $\pconf_n(\CC)/\CC^\times$ be the quotient of pure configuration space by the free $\CC^\times$ action. The symmetric group 
$S_n$ acts on $\pconf_n(\CC)/\CC^\times$ by permuting coordinates. Let $A_n^\bullet$ be the sequence of $S_n$-modules constructed in 
Proposition \ref{prop:exact-lemma}. Then for each $k\geq 0$ we have  an isomorphism of $S_n$-modules
\[
	H^k(\pconf_n(\CC)/\CC^\times,\QQ)\cong A_n^k.
\]
\end{thm}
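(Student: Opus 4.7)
The plan is to realize the submodules $A_n^\bullet$ geometrically via the Leray-Hirsch decomposition of the principal $\CC^\times$-bundle $\pi : \pconf_n(\CC) \to Y_n$. First I will verify the $\CC^\times$-action is free: for any $(z_1,\ldots,z_n) \in \pconf_n(\CC)$ the distinctness forces at most one coordinate to vanish, so scaling by $c$ fixes the tuple only when $c=1$. Hence $\pi$ is a principal $\CC^\times$-bundle, and since the $\CC^\times$-action commutes with the coordinate permutation $S_n$-action on $\pconf_n(\CC)$, the map $\pi$ is $S_n$-equivariant.

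Next I will apply the Leray-Hirsch theorem using the class $\tau = \sum_{i<j}\omega_{i,j} \in H^1(\pconf_n(\CC),\QQ)$. The key input is that $\tau$ restricts on each $\CC^\times$-fiber to a nonzero element of $H^1(\CC^\times,\QQ)$. Under the de Rham realization of Arnol'd's presentation, $\omega_{i,j}$ is the pullback of the standard generator of $H^1(\CC^\times,\QQ)$ under $(z_1,\ldots,z_n)\mapsto z_j-z_i$. A $\CC^\times$-fiber is parametrized by $c\mapsto (cz_1,\ldots,cz_n)$, along which $z_j-z_i$ becomes $c(z_j-z_i)$, so $\omega_{i,j}$ restricts to the standard generator; summing, $\tau$ restricts on each fiber to $\binom{n}{2}$ times the generator, nonzero over $\QQ$. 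Thus $\{1,\tau\}$ restricts on each fiber to a basis of $H^\bullet(\CC^\times,\QQ)$, and Leray-Hirsch yields, in each degree, the $S_n$-equivariant decomposition
\[
H^k(\pconf_n(\CC),\QQ) \;=\; \pi^*H^k(Y_n,\QQ) \;\oplus\; \pi^*H^{k-1}(Y_n,\QQ)\wedge\tau,
\]
the $S_n$-equivariance following because $\tau$ is $S_n$-invariant and $\pi$ is $S_n$-equivariant.

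Finally I will match this with Proposition \ref{prop:exact-lemma}. The identity $\tau\wedge\tau = 0$, verified by a direct exterior-algebra computation in which antisymmetry collapses all cross-terms, shows $d^k = \wedge\tau$ annihilates the second Leray-Hirsch summand. Restricted to the first summand, $d^k$ maps $\pi^*H^k(Y_n,\QQ)$ isomorphically onto $\pi^*H^k(Y_n,\QQ)\wedge\tau$ by the Leray-Hirsch freeness. Consequently,
\[
A_n^k \;=\; \im(d^k) \;=\; \pi^*H^k(Y_n,\QQ)\wedge\tau \;\cong\; \pi^*H^k(Y_n,\QQ) \;\cong\; H^k(Y_n,\QQ)
\]
as $S_n$-representations, where the last isomorphism uses the injectivity of $\pi^*$ that is built into the Leray-Hirsch decomposition.

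The main obstacle, in my view, is passing from Arnol'd's algebraic presentation in Theorem \ref{arnold} to the de Rham realization in which $\omega_{i,j} = \tfrac{1}{2\pi i}\,d\log(z_j-z_i)$, since this identification is what makes the fiber-restriction computation for $\tau$ rigorous. Once that input is invoked, the argument is purely formal.
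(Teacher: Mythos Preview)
Your argument is correct and takes a genuinely different route from the paper. The paper factors $\CC^\times = \RR^+ \times S^1$, deformation retracts $\pconf_n(\CC)$ onto the quotient $Z_n = \pconf_n(\CC)/\RR^+$, and then runs the Gysin sequence for the $S^1$-bundle $Z_n \to Y_n$; the Euler class vanishes because of an explicit (non-equivariant) global section, so the Gysin sequence splits into $S_n$-equivariant short exact sequences giving $H^k(P_n,\QQ)\cong H^{k-1}(Y_n,\QQ)\oplus H^k(Y_n,\QQ)$. It then matches this with $H^k(P_n,\QQ)\cong A_n^{k-1}\oplus A_n^k$ by an inductive cancellation argument using Maschke's theorem.

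Your Leray--Hirsch approach is more direct: by checking that $\tau$ restricts to $\binom{n}{2}$ times the fiber generator, you obtain $H^\bullet(P_n,\QQ)$ as a free $\pi^*H^\bullet(Y_n,\QQ)$-module on $\{1,\tau\}$, and then read off $A_n^k=\im(\wedge\tau)=\pi^*H^k(Y_n,\QQ)\wedge\tau$ explicitly, bypassing the inductive step entirely. The cost is the de Rham input you flag (identifying $\omega_{i,j}$ with $\tfrac{1}{2\pi i}d\log(z_j-z_i)$), whereas the paper's Gysin argument needs only the existence of a section. One small remark: $\tau\wedge\tau=0$ is immediate from graded-commutativity in degree~$1$ over $\QQ$; no cross-term computation is needed.
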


\begin{proof}
We regard $X_n := \pconf_n(\CC)$ as the total space of a $\CC^{\times}$-bundle over the base 
space $Y_n := \pconf_n(\CC)/\CC^\times$. 
As noted in Section \ref{subsec:twist} the cohomology of $X_n$ is that of the pure braid group, with its $S_n$-action.
Viewing $\CC^{\times}$ as $\RR^+ \times S^1$, 
 we see that $X_n$  
is  an $\RR^+$-bundle over the base space
$Z_n := \pconf_n(\CC)/\RR^+$, such that $Z_n$ is an $S^1$-bundle over $Y_n$.
The space $Z_n$ is a real-analytic manifold  which inherits the $S_n$-action.
For any $(z_1,z_2, \ldots, z_n) \in \pconf_n(\CC)$, let $[[z_1,z_2, \ldots, z_n]]$ denote its image in $Z_n$
Since $z_1 \neq z_2$, we may
rescale this vector by $c = \frac{1}{|z_1- z_2|} \in \CC^{\times}$ to get
$(\tilde{z}_1, \tilde{z}_2, \ldots, \tilde{z}_n) = \frac{1}{|z_1 - z_2|} (z_1, \ldots, z_n)$, which
comprise exactly the set of all $(\tilde{z}_1, \tilde{z}_2, \ldots, \tilde{z}_n) \in X_n$ 
satisfying the linear constraint  $\tilde{z}_1 - \tilde{z}_2 \in U(1) = \{ z \in \CC: \, |z|=1\}$.
We obtain a global section $Z_n \to X_n$ by mapping $[[z_1,z_2, \ldots, z_n]] \mapsto \frac{1}{|z_1 - z_2|}(z_1, \ldots, z_n)$,
so may regard $Z_n \subset X_n$, noting that it is invariant under the $S_n$-action.  Under this embedding we see that $Z_n$ is
a strong deformation retract of $X_n$, so has the same homotopy type as $X_n$.
The retraction map is: 
\[
	h_t(z_1, z_2,\ldots, z_n) := \big((1-t) |z_1 -z_2| + t\big) \frac{1}{|z_1-z_2|} (z_1, z_2, \ldots, z_n)  \quad  \mbox{for} \quad 0 \le t\le 1. 
\]
Consequently $H^k( X_n, \QQ) \cong H^k(Z_n, \QQ)$,
 for each $k \ge 0$ as $S_n$-modules.

For any $(z_1,z_2, \ldots, z_n) \in X_n$, let $[z_1,z_2, \ldots, z_n]$ denote its image in $Y_n$.
Since $z_1 \neq z_2$, we may
rescale this vector by $\frac{1}{z_1- z_2} \in \CC^{\times}$ to get
$(\tilde{z}_1, \tilde{z}_2, \ldots, \tilde{z}_n) = \frac{1}{z_1 - z_2}(z_1, \ldots, z_n)$, which
comprise exactly the set of all $(\tilde{z}_1, \tilde{z}_2, \ldots, \tilde{z}_n) \in X_n$ 
satisfying the linear constraint  $\tilde{z}_1 - \tilde{z}_2 = 1$.
These define a global coordinate system for $Y_n$, identifying it as an open complex manifold, 
and the  map $Y_n \to X_n$ sending $[z_1, z_2 \ldots, z_n] \mapsto (\tilde{z}_1, \tilde{z}_2, \ldots, \tilde{z}_n)$
is a nowhere vanishing global section of this bundle, so we may view  $Y_n \subset Z_n \subset X_n$. 
This map is a nowhere vanishing section of $Y_n$ inside the $S^1$-bundle $Z_n$ as well.

The Gysin long exact sequence for $Z_n$ as an $S^1$-bundle over $Y_n$ is 
\[
	\xrightarrow{e_\wedge} H^{k}(Y_n, \QQ) \rightarrow H^k(Z_n, \QQ) \rightarrow H^{k-1}(Y_n, \QQ) 
	\xrightarrow{e_\wedge} H^{k+1}(Y_n, \QQ) \rightarrow H^{k+1}(Z_n, \QQ) \rightarrow 
\]
The Euler class $e \in H^2(Y_n, \QQ)$ of this is zero since the bundle has a nowhere vanishing global section in $Z_n$. 
Thus $e_\wedge$ is the zero map and the  Gysin sequence 
splits into short exact sequences
\[
	0 \longrightarrow H^{k}(Y_n, \QQ) \longrightarrow H^k(Z_n, \QQ) \longrightarrow H^{k-1}(Y_n, \QQ) \longrightarrow  0.
\]
The maps are $S_n$-equivariant, since the Gysin sequence is
functorial.  
It follows from Maschke's theorem that
\begin{equation}
\label{sum decomp}
	H^k(X_n,\QQ) \cong H^k(Z_n, \QQ) \cong H^{k-1}(Y_n,\QQ) \oplus H^k(Y_n,\QQ)
\end{equation}
as $S_n$-modules.
Since $H^{-1}(Y_n,\QQ) = A_n^{-1} = 0$ by convention, we have $H^0(Y_n,\QQ) \cong A_n^0 \cong H^0(Z_n,\QQ) \cong H^0(X_n, \QQ)$.
It then follows inductively from \eqref{sum decomp} and
\[
	H^k(X_n,\QQ) \cong A_n^{k-1} \oplus A_n^k,
\]
that $H^k(Y_n,\QQ) \cong A_n^k$ as $S_n$-modules for all $k\geq 0$.
 \end{proof}

\begin{remark}\label{rmak}
The configuration space $\pconf(\CC)$ is  a hyperplane complement as treated in
the book of Orlik and Terao \cite{OrlikT:1992}. It equals
\[
	M(\sA_n): = \CC^n \smallsetminus \bigcup_{H_{i,j} \in \sA_n} H_{i,j},
\]
where 
$\sA_n :=\{ H_{i,j}: \, 1 \le i  < j \le n\}$ denotes  the {\em braid arrangement} of
hyperplanes 
$H_{i,j} : z_i = z_j$ for $1 \le i< j\le n$. 
\end{remark}

%
%
\section{Polynomial splitting measures and characters}\label{sec:splitting-characters}

We now express the splitting measure coefficients $\alpha_n^k(C_\lambda)$ in terms of the character values $\chi_n^k(\lambda)$ 
where $\chi_n^k$ is the character of the $S_n$-representation $A_n^k$ constructed in Proposition \ref{prop:exact-lemma}.
As a corollary we deduce that the rescaled $z$-splitting measures are characters when $z = -\frac{1}{m}$ and virtual characters when $z = \frac{1}{m}$, generalizing results from \cite{Lagarias:2016}.

%
%
\subsection{Expressing splitting measure coefficients by characters}\label{sec:51}

Recall,
\[
\nu_{n, z}^{\ast}(C_{\lambda}) =    \frac{N_\lambda(z)}{z^n - z^{n-1}}=    \sum_{k=0}^{n-1} \alpha_n^k(C_{\lambda}) \big(\tfrac{1}{z}\big)^k.
\]
We now express the splitting measure coefficient $\alpha_n^k(C_{\lambda})$ in terms of the character value $\chi_n^k(\lambda)$.

\begin{thm}\label{thm:splitting-coeffs}
Let $n\geq 2$ and $\lambda$ be a partition of $n$, then
\[
	\nu^*_{n,z}(C_\lambda) = \frac{1}{z_\lambda} \sum_{k=0}^{n-1}{(-1)^k \chi_n^k(\lambda) \big(\tfrac{1}{z}\big)^k},
\]
where $\chi_n^k$ is the character of the $S_n$-representation $A_n^k$ defined in Proposition \ref{prop:exact-lemma}. 
Thus,
\[
	\alpha_n^k(C_{\lambda}) = \frac{1}{z_\lambda}(-1)^k \chi_n^k(\lambda).
\]
\end{thm}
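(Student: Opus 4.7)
The plan is to combine Theorem \ref{cycle-coeffs} with the direct sum decomposition $H^k(P_n,\QQ) \cong A_n^{k-1}\oplus A_n^k$ from Proposition \ref{prop:exact-lemma}, and then simply divide by $z^n - z^{n-1} = z^{n-1}(z-1)$. The key observation is that the decomposition forces a $(z-1)$ factor out of the polynomial $N_\lambda(z)$ by a telescoping-type rearrangement.

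First I would record the character identity implied by the exact sequence in Proposition \ref{prop:exact-lemma}: since $H^k(P_n,\QQ) \cong A_n^{k-1} \oplus A_n^k$ as $S_n$-representations, we have $h_n^k(\lambda) = \chi_n^{k-1}(\lambda) + \chi_n^k(\lambda)$ for all $0 \leq k \leq n$, with the conventions $\chi_n^{-1} = 0$ (since $A_n^{-1}=0$) and $\chi_n^n = 0$ (since the exact sequence terminates with $d^n=0$, so $A_n^n = 0$; alternatively, $h_n^n = 0$ by Example \ref{h0 example}, forcing $\chi_n^{n-1} = 0$ and $\chi_n^n = 0$).

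Next I would substitute this into Theorem \ref{cycle-coeffs} and split the sum in two, shifting the index in the $\chi_n^{k-1}$ piece. Setting $j = k-1$ converts
\[
\sum_{k=0}^n (-1)^k \chi_n^{k-1}(\lambda)\, z^{n-k} = -\sum_{j=0}^{n-1} (-1)^j \chi_n^{j}(\lambda)\, z^{n-1-j},
\]
so combining with the untouched $\chi_n^k$ piece (and dropping the vanishing $k=n$ term) gives
\[
z_\lambda N_\lambda(z) = \sum_{k=0}^{n-1} (-1)^k \chi_n^k(\lambda)\bigl(z^{n-k} - z^{n-1-k}\bigr) = (z-1)\sum_{k=0}^{n-1} (-1)^k \chi_n^k(\lambda)\, z^{n-1-k}.
\]

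Finally, dividing by $z^n - z^{n-1} = z^{n-1}(z-1)$ cancels both the $(z-1)$ and all but one power of $z$, yielding
\[
\nu_{n,z}^*(C_\lambda) = \frac{1}{z_\lambda}\sum_{k=0}^{n-1} (-1)^k \chi_n^k(\lambda)\Bigl(\tfrac{1}{z}\Bigr)^k,
\]
and reading off the coefficient of $(1/z)^k$ gives the stated formula for $\alpha_n^k(C_\lambda)$. The proof is essentially a bookkeeping exercise once Proposition \ref{prop:exact-lemma} is in hand, so the only possible subtlety is verifying the boundary conventions $\chi_n^{-1} = \chi_n^n = 0$ that make the index shift work cleanly; this is immediate from the construction of the $A_n^k$ as images of $d^k$ in the exact sequence \eqref{Exact}.
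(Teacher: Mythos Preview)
Your proof is correct and follows essentially the same route as the paper: substitute $h_n^k = \chi_n^{k-1} + \chi_n^k$ from Proposition~\ref{prop:exact-lemma} into Theorem~\ref{cycle-coeffs}, then factor and divide. The only cosmetic difference is that the paper first divides by $z^n$ and pulls out the factor $(1-\tfrac{1}{z})$, whereas you pull out $(z-1)$ before dividing by $z^{n-1}(z-1)$; these are the same manipulation. One small remark: in your parenthetical, $h_n^n=0$ by itself only gives $\chi_n^{n-1}+\chi_n^n=0$, and you need the extra observation that both are characters of genuine representations (hence vanish together) to conclude each is zero---but your primary argument via $A_n^n=\im(d^n)=0$ already suffices, and the proof only requires $\chi_n^{-1}=\chi_n^n=0$, not $\chi_n^{n-1}=0$.
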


\begin{proof}
In Theorem \ref{cycle-coeffs} we showed
\[
	N_\lambda(z) = \frac{1}{z_\lambda}\sum_{k=0}^{n}{(-1)^k h_n^k(\lambda)z^{n-k}},
\]
where $h_n^k$ is the character of $H^k(P_n,\QQ)$. The $S_n$-representations $A_n^k$ were defined in 
Proposition \ref{prop:exact-lemma} where we showed that
\begin{equation}\label{dir sum}
	H^k(P_n,\QQ) \cong A_n^{k-1} \oplus A_n^k.
\end{equation}
Taking characters in \eqref{dir sum} gives
\[
	h_n^k = \chi_n^{k-1} + \chi_n^k.
\]
We compute
\begin{align*}
	\frac{N_\lambda(z)}{z^n} &= \frac{1}{z_\lambda}\sum_{k=0}^n{(-1)^kh_n^k(\lambda) \big(\tfrac{1}{z}\big)^k}\\
	&= \frac{1}{z_\lambda}\sum_{k=0}^n{(-1)^k\big(\chi_n^{k-1}(\lambda) + \chi_n^k(\lambda)\big)\big(\tfrac{1}{z}\big)^k}\\
	&= \big(1 - \tfrac{1}{z}\big)\frac{1}{z_\lambda}\sum_{k=0}^{n-1}{(-1)^k \chi_n^k(\lambda)\big(\tfrac{1}{z}\big)^k}.
\end{align*}
Dividing both sides by $\big(1 - \frac{1}{z}\big)$ yields
\[
	\nu_{n,z}^\ast(C_\lambda) = \frac{N_\lambda(z)}{\big(1 - \tfrac{1}{z}\big)z^n} = \frac{1}{z_\lambda}\sum_{k=0}^{n-1}{(-1)^k \chi_n^k(\lambda)\big(\tfrac{1}{z}\big)^k}.
\]
Comparing coefficients in the two expressions for $\nu_{n,z}^\ast(C_\lambda)$ we find
\[	
	\alpha_n^k(C_{\lambda}) = \frac{1}{z_\lambda}(-1)^k \chi_n^k(\lambda).
\]
\end{proof}
%
%
\subsection{Cycle polynomial and splitting measure as equivariant Poincar\'{e} polynomials} \label{sec:hyperplane2}

Given a complex manifold $X$, the \emph{Poincar\'{e} polynomial of $X$} is defined by
\[
	P(X, t) = \sum_{k\geq 0}{\dim H^k(X,\QQ) t^k}.
\]
If a finite group $G$ acts on $X$, then the cohomology $H^k(X,\QQ)$ is a $\QQ$-representation of $G$ with character $h_X^k$, and the \emph{equivariant Poincar\'{e} polynomial of $X$ at $g\in G$} is defined by
\[
	P_g(X,t) = \sum_{k\geq 0}\tr(g, H^k(X, \QQ) t^k = \sum_{k \ge 0} h_X^k(g)  t^k.
\]
Note that if $g = 1$ is the identity of $G$, then $h_X^k(1) = \dim H^k(X,\QQ)$ and $P_1(X,t) = P(X,t)$.

Under the change of variables $z = -\frac{1}{t}$,
the work  of Lehrer \cite[Theorem 5.5]{Lehrer:1987} identifies 
(rescaled)  cycle polynomials with  equivariant Poincar\'{e} polynomials of $\pconf_n(\CC)$, 
for $g \in S_n$, as 
\[
	\frac{1}{z^n}N_{[g]}(z) = \frac{|C_{\lambda}|}{n!}\sum_{k\geq 0}{h_n^k(g) t^k} = \frac{1}{z_\lambda}P_{g}(\pconf_n(\CC), t)
\]

Using the result of Section \ref{sec:43a} we obtain a similar interpretation of the splitting measure values. 

\begin{thm}\label{thm:splitting-coeffs2} 
Let $Y_n= \pconf_n(\CC)/\CC^{\times}$.
 Setting $\w = -\frac{1}{z}$,  for each $g \in S_n$ the $z$-splitting measure  is given
by the scaled equivariant Poincar\'{e} polynomial 
\[
	\nu^*_{n,z}(g) = \frac{1}{n!} \sum_{k=0}^{n-1}{ \tr(g: H^k( Y_n, \QQ)) \w^k},
\]
attached to  the complex manifold  $Y_n$, where $g$
acts as a permutation of the coordinate.
\end{thm}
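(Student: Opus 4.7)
The plan is to combine Theorem \ref{thm:splitting-coeffs} with Theorem \ref{thm:equivariant} and perform the bookkeeping to pass from values on conjugacy classes to values on individual elements.

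First I would recall that Theorem \ref{thm:splitting-coeffs} gives, for $g \in C_\lambda$,
\[
    \nu_{n,z}^\ast(C_\lambda) = \frac{1}{z_\lambda}\sum_{k=0}^{n-1}(-1)^k \chi_n^k(\lambda)\bigl(\tfrac{1}{z}\bigr)^k,
\]
where $\chi_n^k$ is the character of $A_n^k$. Using $\nu_{n,z}^\ast(C_\lambda) = |C_\lambda|\,\nu_{n,z}^\ast(g)$ together with $|C_\lambda|\,z_\lambda = n!$, I divide by $|C_\lambda|$ to convert to the pointwise value
\[
    \nu_{n,z}^\ast(g) = \frac{1}{n!}\sum_{k=0}^{n-1}(-1)^k \chi_n^k(\lambda)\bigl(\tfrac{1}{z}\bigr)^k.
\]

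Next, I substitute $\w = -\tfrac{1}{z}$ to absorb the sign $(-1)^k$ into the variable, obtaining
\[
    \nu_{n,z}^\ast(g) = \frac{1}{n!}\sum_{k=0}^{n-1} \chi_n^k(\lambda)\,\w^k.
\]

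Finally, I invoke Theorem \ref{thm:equivariant}, which identifies $A_n^k \cong H^k(Y_n,\QQ)$ as $S_n$-modules. Because the isomorphism is $S_n$-equivariant, the characters agree, so $\chi_n^k(\lambda) = \tr(g, H^k(Y_n,\QQ))$ for any $g \in C_\lambda$. Substituting this into the previous display gives the stated formula. There is no genuine obstacle here: all the content was already placed in Theorems \ref{thm:splitting-coeffs} and \ref{thm:equivariant}; the only thing to watch is the normalization factor $n!/z_\lambda$ when passing from class values to pointwise values, and the sign convention hidden in the change of variables $\w = -1/z$.
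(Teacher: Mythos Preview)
Your proposal is correct and follows essentially the same approach as the paper: invoke Theorem \ref{thm:splitting-coeffs}, identify $A_n^k$ with $H^k(Y_n,\QQ)$ via Theorem \ref{thm:equivariant}, and observe that the passage from class values to pointwise values converts the prefactor $1/z_\lambda$ into $1/(z_\lambda c_\lambda) = 1/n!$. The paper's version is more terse, but the content is identical.
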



\begin{proof} 
This formula follows  from Theorem \ref{thm:splitting-coeffs}, using also the identification of $A_n^k = H^k(Y_n, \QQ)$
as an $S_n$-module in Theorem \ref{thm:equivariant}.
Since we evaluate the character on a single element $g \in S_n$, the prefactor
becomes $\frac{1}{z_{\lambda} c_{\lambda}} = \frac{1}{n!}.$
\end{proof}

\begin{remark}
In the theory of hyperplane arrangements treated in \cite{OrlikT:1992} the  change of variable $z= -\frac{1}{t}$ appears as an involution
converting the Poincar\'{e} polynomial of a hyperplane complement (such as $\pconf_n(\CC)$) to another invariant,
the {\em characteristic polynomial} of an arrangement, given in  \cite[Defn. 2.52]{OrlikT:1992}).

\end{remark}
%
%

\subsection{Splitting measures for $z = \pm \frac{1}{m}$.}\label{sec:52}

Representation-theoretic interpretations of the rescaled $z$-splitting measures for $z = \pm 1$ were studied in \cite[Sec. 5]{Lagarias:2016}.
 Theorem \ref{thm:main-2a} below generalizes those results to give representation-theoretic interpretations for $z = \pm \frac{1}{m}$ when $m\geq 1$ is an integer.

\begin{thm}\label{thm:main-2a} 
Let $n\geq 2$ and $\lambda$ be a partition of $n$, then

\noindent(1) For $z = -\frac{1}{m}$ with $m\geq1$ an integer, we have
\[
	\nu_{n,-\frac{1}{m}}^\ast(C_\lambda) = \frac{1}{z_\lambda}\sum_{k=0}^{n-1}{\chi_n^k(\lambda)m^k}.
\] 
The function $z_\lambda \nu_{n, -\frac{1}{m}}^\ast(C_\lambda)$ is therefore the character of the $S_n$-representation
\[
	B_{n,m} = \bigoplus_{k=0}^{n-1}{\big(A_n^k\big)^{\oplus m^k}},
\]
with dimension
\[
	\dim B_{n,m} = \prod_{j=2}^{n-1}{(1 + jm)}.
\]
(2) For $z = \frac{1}{m}$ with $m\geq 1$ an integer, we have
\[
	\nu_{n, \frac{1}{m}}^\ast(C_\lambda) = \frac{1}{z_\lambda}\sum_{k=0}^{n-1}{(-1)^k\chi_n^k(\lambda) m^k}.
\]
The function $z_\lambda \nu_{n,\frac{1}{m}}^\ast(C_\lambda)$ is a virtual character, the difference of characters of representations $B_{n,m}^+$ and $B_{n,m}^-$,
\[
	B_{n,m}^+ \cong \bigoplus_{2j < n}{\big(A_n^{2j}\big)^{\oplus m^{2j}}} \hspace{2em} B_{n,m}^- \cong \bigoplus_{2j+1 < n}{\big(A_n^{2j+1}\big)^{\oplus m^{2j+1}}}.
\]
These representations have dimensions 
\[
	\dim B_{n,m}^{\pm} = \frac{1}{2} \Big( \prod_{j=2}^{n-1}(1+ jm)  \pm \prod_{j=2}^{n-1} (1-jm) \Big)
\]
respectively.
\end{thm}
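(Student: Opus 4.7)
The plan is to derive both parts of the theorem by direct substitution into Theorem \ref{thm:splitting-coeffs} and then verify the dimension formulas by evaluating at the identity element.

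For part (1), setting $z = -\tfrac{1}{m}$ in Theorem \ref{thm:splitting-coeffs} gives $\tfrac{1}{z} = -m$, so $(-1)^k\bigl(\tfrac{1}{z}\bigr)^k = (-1)^k(-m)^k = m^k$, which yields the first displayed identity immediately. Since each $\chi_n^k$ is the character of the honest $S_n$-representation $A_n^k$ from Proposition \ref{prop:exact-lemma}, and since character addition corresponds to direct sum while multiplication of a character by a positive integer $m^k$ corresponds to taking that many copies, the function $z_\lambda\nu^*_{n,-1/m}(C_\lambda)$ is the character of $B_{n,m} = \bigoplus_{k=0}^{n-1}(A_n^k)^{\oplus m^k}$.

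For part (2), setting $z = \tfrac{1}{m}$ gives $\tfrac{1}{z} = m$, and so $(-1)^k\bigl(\tfrac{1}{z}\bigr)^k = (-1)^k m^k$. The alternating sign naturally splits the sum into the even-degree and odd-degree parts, producing the virtual character $\chi_{B^+_{n,m}} - \chi_{B^-_{n,m}}$ with the stated summands.

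For the dimensions, I would use that the dimension of an $S_n$-representation equals its character value at the identity, which corresponds to $\lambda = (1^n)$ with $z_{(1^n)} = n!$. Using the cycle polynomial $N_{(1^n)}(z) = \binom{z}{n}$ together with the denominator factorization $z^n - z^{n-1} = z^{n-1}(z-1)$, after canceling the leading $z$ and the $(z-1)$ factor in the numerator one obtains
\[
\nu^*_{n,z}(C_{(1^n)}) = \frac{1}{n!}\prod_{i=2}^{n-1}\bigl(1 - \tfrac{i}{z}\bigr).
\]
Substituting $z = -\tfrac{1}{m}$ and $z = \tfrac{1}{m}$ gives $\tfrac{1}{n!}\prod_{i=2}^{n-1}(1+im)$ and $\tfrac{1}{n!}\prod_{i=2}^{n-1}(1-im)$ respectively. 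By parts (1) and (2), these products equal $\dim B_{n,m}$ and $\dim B^+_{n,m} - \dim B^-_{n,m}$. Since the underlying vector space of $B^+_{n,m} \oplus B^-_{n,m}$ is the same as that of $B_{n,m}$, we have $\dim B^+_{n,m} + \dim B^-_{n,m} = \prod_{j=2}^{n-1}(1+jm)$, and solving this two-by-two linear system yields the stated half-sum and half-difference formulas for $\dim B^\pm_{n,m}$.

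This theorem is essentially a corollary of Theorem \ref{thm:splitting-coeffs} combined with the construction of Proposition \ref{prop:exact-lemma}, so no real obstacle arises. The only items needing care are sign-tracking in the $z = 1/m$ case and the clean evaluation of $\binom{z}{n}/(z^n - z^{n-1})$ at $z = \pm 1/m$; both are routine once the cancellation of the initial $i = 0, 1$ factors is noticed, which is exactly what produces the product starting at $j = 2$ in the dimension formulas.
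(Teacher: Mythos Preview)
Your proposal is correct and follows essentially the same approach as the paper: both substitute $z=\pm\frac{1}{m}$ into Theorem~\ref{thm:splitting-coeffs} and then evaluate at $\lambda=(1^n)$ to obtain the dimension formulas. The only minor difference is in packaging the dimension computation: the paper cites Arnol'd's product formula $\prod_{j=1}^{n-1}(1+jt)$ for the Poincar\'e polynomial of $P_n$ and divides by $1+t$, whereas you compute $\nu_{n,z}^\ast(C_{(1^n)})=\frac{1}{n!}\prod_{i=2}^{n-1}(1-\tfrac{i}{z})$ directly from $N_{(1^n)}(z)=\binom{z}{n}$; these are the same calculation viewed from the $t$-side and the $z$-side respectively.
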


\begin{proof}
(1) The formula for the $(-\frac{1}{m})$-splitting measure follows by substituting $z = -\frac{1}{m}$ in Theorem \ref{thm:splitting-coeffs}.
Arnol'd \cite[Cor. 2]{Arn:1969} shows the Poincar\'{e} polynomial $p(t)$ of the pure braid group $P_n$ has the product form
\[
	p(t) = \prod_{j=1}^{n-1}{(1 + jt)} = \sum_{k=0}^n{h_n^k\big((1^n)\big) t^k}.
\]
On the other hand, by Theorem \ref{cycle-coeffs} we have
\begin{equation}
\label{poincare}
	n! (-1)^n t^n N_{(1^n)}(-t^{-1}) = \sum_{k=0}^n{h_n^k\big((1^n)\big) t^k}.
\end{equation}
Dividing \eqref{poincare} by $1 + t$ we have
\begin{equation}
\label{poincare2}
	\prod_{j=2}^{n-1}{(1 + jt)} = n! (-1)^n t^n \frac{N_{(1^n)}(-t^{-1})}{1 + t} = \sum_{k=0}^{n-1}{\chi_n^k\big((1^n)\big) t^k}.
\end{equation}
Substituting $t = m$ gives the dimension formula.
 
(2) Substituting $z= \frac{1}{m}$ in Theorem \ref{thm:splitting-coeffs} gives the formula for the $(\frac{1}{m})$-splitting measure. Separating the even and odd parts we have
\[
	z_\lambda \nu_{n, \frac{1}{m}}^\ast(C_\lambda) = \sum_{2j < n}{\chi_n^{2j}(\lambda)m^{2j}} - \sum_{2j+1 < n}{\chi_n^{2j+1}(\lambda)m^{2j+1}}.
\]
Hence $z_\lambda \nu_{n,\frac{1}{m}}^\ast(C_\lambda) = \chi_{n,m}^+(\lambda) - \chi_{n,m}^-(\lambda)$, where $\chi_{n,m}^{\pm}$ are characters of $B_{n,m}^{\pm}$ respectively.
The dimension formulas follow from decomposing \eqref{poincare2} into even and odd parts.
\end{proof}

\begin{remark}
Other results in \cite[Theorems 3.2, 5.2 and 6.1]{Lagarias:2016} determine the values of the rescaled splitting measures for $z = \pm 1$,
showing they are supported on remarkably few conjugacy classes; for $z=1$ these were the Springer regular
elements of $S_n$. Theorem \ref{thm:main-2a} does not account for  the small support of the characters for $z =\pm1$.
The characters $h_n^k$ and $\chi_n^k$ have large support in general, hence cancellation must occur to explain the small support.
It would be interesting to account for this phenomenon.
\end{remark}

%
%

\subsection{Cohomology  of the pure braid group and the regular representation}\label{sec:53}

We use Theorem \ref{thm:splitting-coeffs} together with the splitting measure values at $z=-1$  computed in \cite{Lagarias:2016}
to determine a relation between the $S_n$-representation structure of the pure braid group cohomology
and  the regular representation of $S_n$. 
Let $A_n^k$ be the $S_n$-subrepresentation constructed in Proposition \ref{prop:exact-lemma}, and define the $S_n$-representation
\[
	A_n := \bigoplus_{k=0}^{n-1} A_n^k.
\]


\begin{thm}\label{thm:53}
Let  $\triv_n$, $\Sign_n$, and $\QQ [S_n]$ denote the trivial, sign, and regular representations of $S_n$  respectively. 
Then there are isomorphisms of $S_n$-representations,
\[
	\bigoplus_{k=0}^n{H^k(P_n,\QQ)\otimes \Sign_n^{\otimes k} }\cong\QQ[S_n].
\] 
and
\[
    A_n\otimes \big( \triv_n \oplus\Sign_n\big)\cong\QQ [S_n].
\]
\end{thm}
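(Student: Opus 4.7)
The plan is to prove both isomorphisms by checking equality of characters, which over $\QQ$ is equivalent to isomorphism of $S_n$-representations. The essential external input is the computation of $\nu_{n,-1}^\ast$ in \cite{Lagarias:2016}, which shows that the $(-1)$-splitting measure is supported only on the identity and on the transposition class, with $\nu_{n,-1}^\ast(e)=\tfrac12$.

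First I would use the decomposition $H^k(P_n,\QQ)\cong A_n^{k-1}\oplus A_n^k$ of Proposition \ref{prop:exact-lemma}, which at the character level gives $h_n^k=\chi_n^{k-1}+\chi_n^k$ with the convention $\chi_n^{-1}=0$. A straightforward regrouping yields
\[
\sum_{k=0}^n h_n^k(g)\,\sign(g)^k=(1+\sign(g))\sum_{j=0}^{n-1}\chi_n^j(g)\,\sign(g)^j
\]
for every $g\in S_n$. The character of $A_n\otimes(\triv_n\oplus\Sign_n)$ at $g$ is likewise $\chi_{A_n}(g)(1+\sign(g))$. Both expressions therefore vanish on every odd permutation, and on even permutations they collapse to the common value $2\chi_{A_n}(g)$, since $\sign(g)=1$ allows the inner sum to simplify to $\sum_j\chi_n^j(g)=\chi_{A_n}(g)$. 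The regular-representation character $\chi_{\QQ[S_n]}$ also vanishes off the identity, so the theorem reduces to showing that $2\chi_{A_n}$ agrees with $\chi_{\QQ[S_n]}$ on even conjugacy classes.

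Next, specializing Theorem \ref{thm:splitting-coeffs} at $z=-1$ gives the clean identity
\[
\chi_{A_n}(\lambda)=\sum_{k=0}^{n-1}\chi_n^k(\lambda)=z_\lambda\,\nu_{n,-1}^\ast(C_\lambda),\qquad\lambda=[g].
\]
Combined with the input from \cite{Lagarias:2016}, this shows $\chi_{A_n}$ is nonzero only on the identity and on transpositions; since transpositions are odd, the only even class on which $\chi_{A_n}$ survives is the identity, where $\chi_{A_n}(e)=n!\cdot\tfrac12=n!/2$. Hence $2\chi_{A_n}(e)=n!=\chi_{\QQ[S_n]}(e)$ and $2\chi_{A_n}(g)=0$ for every other even $g$, completing both isomorphisms.

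The main obstacle is the external input: the small support of $\nu_{n,-1}^\ast$ is a genuinely number-theoretic fact, reflecting the vanishing of the cycle polynomials $N_\lambda(-1)$ for even $\lambda\neq(1^n)$. With this input in hand, the remaining argument is a formal character manipulation resting on Proposition \ref{prop:exact-lemma} and on the specialization of Theorem \ref{thm:splitting-coeffs} at $z=-1$.
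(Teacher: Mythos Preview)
Your proposal is correct and follows essentially the same route as the paper: reduce both isomorphisms to a character computation via the decomposition $h_n^k=\chi_n^{k-1}+\chi_n^k$ from Proposition~\ref{prop:exact-lemma}, specialize Theorem~\ref{thm:splitting-coeffs} at $z=-1$ to identify $\chi_{A_n}(\lambda)=z_\lambda\,\nu_{n,-1}^\ast(C_\lambda)$, and then invoke the support result for $\nu_{n,-1}^\ast$ from \cite{Lagarias:2016}. The only cosmetic difference is that the paper first records the representation-level identity $A_n\cong\bigoplus_{k\text{ even}}H^k(P_n,\QQ)\cong\bigoplus_{k\text{ odd}}H^k(P_n,\QQ)$ before passing to characters, whereas you carry out the equivalent regrouping directly at the character level.
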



\begin{proof}
We showed in Proposition \ref{prop:exact-lemma} that $H^k(P_n,\QQ) \cong A_n^{k-1} \oplus A_n^k$, with
  $A_n^{-1} = A_n^n = 0$. Therefore, summing over  $0 \le k \le n$, 
\[
	A_n \cong \bigoplus_{k \text{ even}}H^k(P_n,\QQ) \cong \bigoplus_{k \text{ odd}}H^k(P_n,\QQ).
\]
Since $\Sign_n^{\otimes 2} \cong \triv_n$, we have
\begin{align*}
	\bigoplus_{k=0}^n{H^k(P_n,\QQ)\otimes \Sign_n^{\otimes k}}
	&\cong  \Big(\bigoplus_{k \text{ even}}H^k(P_n,\QQ)\otimes \triv_n \Big) \oplus \Big(\bigoplus_{k \text{ odd}}H^k(P_n,\QQ)\otimes \Sign_n \Big)\\
	&\cong (A_n \otimes \triv_n) \oplus (A_n \otimes \Sign_n)\\
	&\cong A_n \otimes (\triv_n \oplus \,\Sign_n).
\end{align*}
If $\chi_n$ is the character of $A_n$, then it follows from Theorem \ref{thm:main-2} that 
\[
	\chi_n(\lambda) = \sum_{k=0}^{n-1}\chi_n^k(\lambda) = z_\lambda \nu_{n,-1}^\ast(C_\lambda),
\]
so the values of $\chi_n$ are given by the rescaled $(-1)$-splitting measure. 

Theorem 6.1 of \cite{Lagarias:2016} shows
\[
	\nu_{n,-1}^\ast(C_\lambda) = \begin{cases} \frac{1}{2} & \lambda = (1^n) \text{ or } (1^{n-2}\, 2),\\ 0 & \text{otherwise.}\end{cases}
\]

Now let $\rho = \chi_n \cdot (1_n + \sign_n)$ be the character of $A_n \otimes (\triv_n \oplus\, \Sign_n)$. If $\lambda = (1^n)$, we compute
\[
	\rho(\lambda)= \chi_n(\lambda)\big(1 + \sign_n(\lambda)\big) = n! \nu_{n,-1}^\ast(C_\lambda)(2) = n!.
\]
If $\lambda = (1^{n-2}\, 2)$, then $\big(1 + \sign_n(\lambda)\big) = 0$, hence $\rho(\lambda) = 0$. If $\lambda$ is any other partition, then $\nu_{n,-1}^\ast(C_\lambda) = 0$, hence $\rho(\lambda) = 0$. Therefore $\rho$ agrees with the character of the regular representation, proving
\[
	\bigoplus_{k=0}^n{H^k(P_n,\QQ)\otimes \Sign_n^{\otimes k}} \cong A_n \otimes (\triv_n \oplus\, \Sign_n) \cong \QQ[S_n].
\]
\end{proof}

%
%
\section{Other Interpretations of $A_n^k$}\label{sec:stability}
Theorem \ref{thm:equivariant} interprets the $S_n$-representation $A_n^k$ geometrically as 
\[
	A_n^k \cong H^k(\pconf_n(\CC)/\CC^\times,\QQ).
\]

{ In this section we note two other interpretations of $A_n^k$, coming from combinatorial constructions previously
studied in the literature. These interpretations imply that the $A_n^k$ for fixed $k$ exhibit representation stability 
in the sense of Church, and Farb \cite{CF:2013} as $n \to \infty$.}\\

Proposition \ref{prop:exact-lemma} gave the following direct sum decomposition of the pure braid group cohomology,
\begin{equation}\label{eqn:A-iso}
	H^k(P_n, \QQ) \cong A_n^{k-1} \oplus A_n^k.
\end{equation}
The isomorphisms \eqref{eqn:A-iso} uniquely determine the $A_n^k$ as $S_n$-representations up to isomorphism.
Uniqueness holds since finite-dimensional representations are semisimple  by Maschke's theorem, using the general result that if
$0 = C^0, C^1, C^2,\ldots$ is any sequence of semisimple modules with submodules $B^k \subseteq C^k$, then isomorphisms
\[
	C^k \cong B^{k-1} \oplus B^k
\]
for each $k$ determine the $B^k$ up to isomorphism.

Let $\Pi_n$ denote the collection of partitions of a set with $n$ elements, partially ordered by refinement (see Stanley \cite[Example 3.10.4]{Stanley:1997}).

Hersh and Reiner \cite[Sec. 2]{HerRein:2015} describe two other sequences of $S_n$-representations giving direct sum decompositions
of $H^k(P_n, \QQ)$ coming from the Whitney and simplicial homology of the lattice $\Pi_n$. 
  %
 %

 \begin{prop}\label{prop:isomorphisms}
 
 (1) There is an isomorphism of $S_n$-representations
 \begin{equation}\label{whitney}
 H^k(P_n, \QQ) \cong WH_k(\Pi_n),
 \end{equation}
 where $WH_k(\Pi_n)$ is the $k$th Whitney homology of the lattice $\Pi_n$.
 
 (2) There is an isomorphism of $S_n$-representations 
\[
 WH_k(\Pi_n) \cong \beta_{ [k-1]} (\Pi_n) \oplus \beta_{ [k]} (\Pi_n) 
\]
where $\beta_{ [k]} (\Pi_n)$ is the $[k]= \{ 1, 2, ..., k\}$-rank selected homology of the lattice $\Pi_n$.

(3) There is an isomorphism of $S_n$-representations
\begin{equation*}
	\beta_{[k]}(\Pi_n) \cong \widetilde{H}_{k-1}\big(\Pi_n^k\big),
\end{equation*}
where $\Pi_n^k$ is the sub-poset of $\lambda \in \Pi_n$ with $|\lambda| - \ell(\lambda) \leq k$ and $\widetilde{H}_{k-1}\big(\Pi_n^k\big)$ 
denotes its reduced simplicial homology.
 \end{prop}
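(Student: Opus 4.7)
These three isomorphisms are classical-flavored identifications from poset topology and the topology of hyperplane arrangements, relating the cohomology $H^{\bullet}(P_n,\QQ)$ to combinatorial invariants of the partition lattice $\Pi_n$. My plan is to establish each part separately, with the bulk of the work going into part (1); parts (2) and (3) should follow from essentially combinatorial repackaging available in Hersh--Reiner \cite{HerRein:2015} and standard poset topology (see Stanley \cite{Stanley:1997}, Bj\"orner--Wachs).

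For part (1), my approach is to combine two classical identifications. First, I would invoke the Orlik--Solomon theorem already cited in the introduction, which identifies $H^{\bullet}(P_n,\QQ) \cong H^{\bullet}(M(\sA_n),\QQ)$ with the Orlik--Solomon algebra of the braid arrangement $\sA_n$ as graded $S_n$-algebras. Second, I would appeal to Bj\"orner's theorem that for a geometric lattice $L$ the degree-$k$ part of the Orlik--Solomon algebra is naturally isomorphic to the Whitney homology $WH_k(L)$, and verify that this identification is equivariant under any group of lattice automorphisms, hence under the natural $S_n$-action on $\Pi_n$. An alternative, essentially equivalent route is to apply the Goresky--MacPherson formula equivariantly to the complement $M(\sA_n)$ and observe that the right-hand side is $WH_k(\Pi_n)$ by definition, once one notes that intervals $(\hat{0}, \pi) \subset \Pi_n$ carry the appropriate Whitney homology contribution.

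For part (2), I would work directly from the definition $WH_k(\Pi_n) = \bigoplus_{\mathrm{rk}(\pi)=k} \widetilde{H}_{k-2}((\hat{0},\pi),\QQ)$ and the Cohen--Macaulay property of $\Pi_n$ to rewrite Whitney homology in terms of rank-selected pieces. The decomposition $WH_k \cong \beta_{[k-1]} \oplus \beta_{[k]}$ is essentially a consequence of the combinatorial identities relating Whitney homology to rank selections via inclusion-exclusion/M\"obius inversion on the set of ranks; I would defer to Hersh--Reiner \cite[Sec. 2]{HerRein:2015} for the precise form, verifying $S_n$-equivariance by noting that every step in the argument depends only on the lattice structure of $\Pi_n$.

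For part (3), the statement is essentially an unpacking of definitions. A set partition $\pi \in \Pi_n$ of $n$ elements into $\ell(\pi)$ blocks has rank $n - \ell(\pi) = |\pi| - \ell(\pi)$ in $\Pi_n$, so the condition $|\lambda| - \ell(\lambda) \leq k$ defining the sub-poset $\Pi_n^k$ is exactly the rank selection to $\{1,2,\ldots,k\}$ (together with $\hat{0}$). The rank-selected homology $\beta_{[k]}(\Pi_n)$ is then by definition the top reduced simplicial homology $\widetilde{H}_{k-1}$ of the order complex of $\Pi_n^k$, which is the claim. The main obstacle will be part (1): carefully tracking $S_n$-equivariance through Bj\"orner's Orlik--Solomon/Whitney-homology identification, since that result is often stated without explicit mention of group actions. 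Fortunately, all constructions depend only on the lattice $\Pi_n$ and its matroid structure, so equivariance under lattice automorphisms comes for free, and the isomorphisms can be taken to be natural.
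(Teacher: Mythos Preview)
Your proposal is correct in spirit, but it is worth noting that the paper does not actually prove this proposition: its ``proof'' consists entirely of three citations. Part (1) is attributed to Sundaram--Welker \cite[Theorem 4.4(iii)]{Sundaram-W:1997}, part (2) to Sundaram \cite[Prop.~1.9]{Sundaram:1994}, and part (3) to the Cohen--Macaulay property of $\Pi_n$ as noted in Hersh--Reiner \cite[Sec.~2.5]{HerRein:2015}. So you are doing strictly more work than the paper by sketching the underlying arguments.

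Your route to (1) via the Orlik--Solomon algebra and Bj\"orner's identification of its graded pieces with Whitney homology is a genuine alternative to citing Sundaram--Welker, and arguably more transparent for this particular arrangement; the equivariance indeed follows from naturality of both constructions under lattice automorphisms. For (2) you correctly locate the result in the literature, though the paper credits Sundaram rather than Hersh--Reiner. One small correction on (3): you describe $\beta_{[k]}(\Pi_n) \cong \widetilde{H}_{k-1}(\Pi_n^k)$ as ``essentially an unpacking of definitions,'' but this is not quite so. The rank-selected invariant $\beta_S$ is typically defined as an alternating sum (a virtual representation), and its identification with a single reduced homology group of the rank-selected subposet requires that the subposet have homology concentrated in top degree---which is exactly what the Cohen--Macaulay property of $\Pi_n$ guarantees. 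The paper invokes Cohen--Macaulayness explicitly for (3), not (2); you have these swapped.
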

 
 \begin{proof}
 (1) This result is due to Sundaram and Welker \cite[Theorem 4.4 (iii)]{Sundaram-W:1997},
 cf.  \cite[Thm. 2.11, Sec. 2.3]{HerRein:2015}.
  (See \cite[Sec. 2.4]{HerRein:2015} for more on the Whitney homology of $\Pi_n$.) 
 
 (2)  Sundaram \cite[Prop. 1.9]{Sundaram:1994}
 decomposes $WH_k(\Pi_n)$ as
\begin{equation}\label{rank-selected}
	WH_k(\Pi_n) \cong \beta_{[k-1]}(\Pi_n) \oplus \beta_{[k]}(\Pi_n),
\end{equation}
where $[k] = \{1,2,\ldots,k\}$ and $\beta_{[k]}(\Pi_n)$ is the \emph{$[k]$-rank selected homology} of the lattice $\Pi_n$ \cite[Prop. 2.17]{HerRein:2015}. 

(3) Because the lattice $\Pi_n$ is \emph{Cohen-Macaulay}, Hersh and Reiner \cite[Sec. 2.5]{HerRein:2015} note the isomorphism
\begin{equation}\label{simplicial}
	\beta_{[k]}(\Pi_n) \cong \widetilde{H}_{k-1}\big(\Pi_n^k\big),
\end{equation}
where $\Pi_n^k$ is the sub-poset of $\lambda \in \Pi_n$ with $|\lambda| - \ell(\lambda) \leq k$ and $\widetilde{H}_{k-1}\big(\Pi_n^k\big)$ is its {\em reduced simplicial homology}.
 \end{proof} 

The following proposition relates $A_n^k$, $\beta_{[k]}(\Pi_n)$, and $\widetilde{H}_{k-1}\big(\Pi_n^k\big)$ using \eqref{eqn:A-iso}.

  %
 %
\begin{prop}\label{prop:connections}
 
Let $\Pi_n$ be the lattice of partitions of an $n$-element set, and $\Pi_n^k \subseteq \Pi_n$ the sub-poset comprised of $\lambda \in \Pi_n$ with $|\lambda| - \ell(\lambda) \leq k$. Then we have the following isomorphisms of $S_n$-representations
\[
	A_n^{k} \cong \beta_{[k]}(\Pi_n) \cong \widetilde{H}_{k-1}\big(\Pi_n^k\big).
\]
\end{prop}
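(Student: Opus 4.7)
The second isomorphism $\beta_{[k]}(\Pi_n) \cong \widetilde{H}_{k-1}(\Pi_n^k)$ is exactly Proposition \ref{prop:isomorphisms}(3), so the only real content is the first isomorphism $A_n^k \cong \beta_{[k]}(\Pi_n)$. My plan is to deduce this by comparing two direct sum decompositions of $H^k(P_n,\QQ)$ and applying the uniqueness principle stated at the beginning of Section \ref{sec:stability}.

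First, I would assemble the two decompositions. On one side, Proposition \ref{prop:exact-lemma} gives $H^k(P_n,\QQ) \cong A_n^{k-1} \oplus A_n^k$ with the boundary convention $A_n^{-1} = 0$. On the other side, composing Proposition \ref{prop:isomorphisms}(1) and (2) gives
\[
H^k(P_n,\QQ) \cong WH_k(\Pi_n) \cong \beta_{[k-1]}(\Pi_n) \oplus \beta_{[k]}(\Pi_n),
\]
again with the convention $\beta_{[-1]}(\Pi_n) = 0$.

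Next I would check the base case $k=0$. Here $H^0(P_n,\QQ) \cong \QQ$ is the trivial representation, and the two decompositions reduce to $A_n^0 \cong \QQ$ and $\beta_{[0]}(\Pi_n) \cong \QQ$ respectively. Then I would argue by induction on $k$: assuming $A_n^{k-1} \cong \beta_{[k-1]}(\Pi_n)$ as $S_n$-representations, the two displayed isomorphisms give
\[
A_n^{k-1} \oplus A_n^k \;\cong\; \beta_{[k-1]}(\Pi_n) \oplus \beta_{[k]}(\Pi_n).
\]
Since $\QQ[S_n]$ is semisimple, Krull--Schmidt (equivalently, cancellation of common summands) forces $A_n^k \cong \beta_{[k]}(\Pi_n)$. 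This is exactly the uniqueness observation articulated at the start of Section \ref{sec:stability}.

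There is no real obstacle here: all the nontrivial input has been supplied earlier. The one point requiring small care is keeping the index conventions straight on both sides (the convention $A_n^{-1} = 0$ must be matched with $\beta_{[-1]}(\Pi_n) = 0$ from Sundaram's decomposition), because the inductive step propagates any off-by-one error. Once the base case $k=0$ and the common recurrence are aligned, the induction runs immediately and yields the claimed isomorphism for every $k \geq 0$.
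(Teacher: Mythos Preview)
Your proposal is correct and follows essentially the same approach as the paper: combine Proposition~\ref{prop:isomorphisms}(1)--(2) with the decomposition of Proposition~\ref{prop:exact-lemma}, match the base case $k=0$, and then induct using the semisimplicity/uniqueness observation from the start of Section~\ref{sec:stability}, invoking part~(3) for the second isomorphism. The paper's proof is organized in the same way.
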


\begin{proof}
The isomorphisms \eqref{whitney} and \eqref{rank-selected} in Proposition \ref{prop:isomorphisms}
 give the direct sum decompositions
\[ 
	H^k(P_n, \QQ) \cong \beta_{[k-1]}(\Pi_n) \oplus \beta_{[k]}(\Pi_n)
\]
for $ 0 \le k \le n$. By \eqref{eqn:A-iso} we have that  
\[
	H^k(P_n, \QQ) \cong A_n^{k-1} \oplus A_n^{k}.
\]
Since for $k=0$,
\[
	\beta_{[-1]}(\Pi_n) \cong A_n^{-1} = \{0\},
\]
we obtain by induction on $k \ge 1$ that
\[
	A_n^k \cong \beta_{[k]}(\Pi_n)
\]
Combining this isomorphism with \eqref{simplicial} finishes the proof.
\end{proof}

{ We deduce the representation stability  of the characters $\chi_n^k$ from known results.}

\begin{proof}[Proof of Theorem \ref{thm:main-3a}]
The $S_n$-representations of the rank-selected homology $\beta_{[k-1]}(\Pi_n)$ were shown by  Hersh and Reiner \cite[Corollary 5.4]{HerRein:2015}
to exhibit representation-stability for fixed $k$
and varying $n$ and to  stabilize sharply at $n=3k+1$ . This fact combined with
Proposition \ref{prop:connections} proves Theorem \ref{thm:main-3a}.
\end{proof}

{The following tables for $A_n^1$ and $A_n^2$ exhibit   representation stability and  the sharp stability phenomenon at  $n=3k+1$. We give
 irreducible decompositions, with multiplicities,  of $H^k(P_n, \QQ)$ and $A_n^1$  in Table \ref{table3a} and for $A_n^2$  in 

Table \ref{table3c}.
To read the  tables, for example, the entry $[4,1,1]$ denotes the isomorphism class of the irreducible representation of $S_6$ associated to the 
Specht module of the partition $[4, 1, 1]$ of $n=6$, in the notation of Sagan  \cite[Sec. 2.3]{sagan},
who gives  a construction of the Specht module representatives of the irreducible isomorphism classes.}

\begin{table}[h]
\renewcommand{\arraystretch}{1.2}
\begin{center}
	\begin{tabular}{| c | c |c | c|c|}
	\hline
	$n$  &  $\dim{H^1}$ & $H^1(P_n, \QQ)$  & $\dim A_n^1$ & $A_n^1$   \\ \hline
        $2$ & $1$ & $[2] $  & $0$ & $0$ \\
	$3$ & $3$ & $[3] \oplus [2,1]$  &$2$& $[2,1]$ \\
	$4$ & $6$ & $[4]\oplus [3,1] \oplus [2,2]$  &$5$& $[3,1] \oplus [2,2]$     \\
	$5$ & $10$  & $[5] \oplus [4,1] \oplus [3,2]$ & $9$& $[4, 1] \oplus [3, 2]$ \\
	\hline
         $n \ge 4$ & ${n \brack n-1}$  & $[n] \oplus [n-1,1] \oplus [n-2,2]$ & ${n \brack n-1} -1$& $[n-1,1] \oplus [n-2,2]$\\
	\hline
	\end{tabular}
	\medskip
	\caption{Irreducible $S_n$-module decompositions for $H^1(P_n, \QQ)$ and $A_n^1$.
	Here $\lambda$ abbreviates the irreducible representation $\sS^{\lambda}$.}
	\label{table3a}
\end{center}
\end{table}

\begin{table}[h]
\renewcommand{\arraystretch}{1.1}
\begin{center}
	\begin{tabular}{| c | c |c |}
	\hline
	$n$ &   $\dim A_n^2$&  $A_n^2$   \\ \hline
	$3$ & $0$& $0$ \\
	$4$ &  $6$ & $[3, 1] \oplus [2,1,1]$\\
	$5$ &  $26$ &  $[4,1] \oplus [3,2] \oplus 2[3,1,1] \oplus [2,2,1]$\\
	$6$ &  $71$ & $[5,1] \oplus [4,2] \oplus 2[4,1,1] \oplus [3,3] \oplus 2[3,2,1]$ \\
        $7$ &  $155$ & $[6,1] \oplus [5,2] \oplus 2[5,1,1] \oplus [4,3] \oplus 2[4,2,1] \oplus [3,3,1]$  \\
        $8$ & $295$ & $[7,1] \oplus [6,2] \oplus 2[6,1,1] \oplus [5,3] \oplus 2[5,2,1] \oplus [4,3,1]$ \\
        \hline
        $n \ge 7$ & ${n \brack n-2} - {n \brack n-1} +1$ & $[n-1,1] \oplus [n-2,2] \oplus 2[n-2,1,1] \oplus [n-3,3]$\\
        & &  $\oplus 2[n-3,2,1] \oplus [n-4,3,1]$\\
	\hline
	\end{tabular}
	\medskip
	\caption{Irreducible $S_n$-module decomposition for  $A_n^2$. }
	\label{table3c}
\end{center}
\end{table}

\paragraph{\bf Acknowledgments.} We  thank Richard Stanley for raising a question about the relation of the
braid group cohomology to the regular representation, answered by Theorem \ref{thm:main-2}.
We thank Weiyan Chen for pointing out to us that Theorem \ref{thm:main-0} is shown
in  Lehrer  \cite{Lehrer:1987} and for subsequently bringing the work of Gaiffi \cite{Gai:1996} to our attention.
We thank  Philip Tosteson and John Wiltshire-Gordon for helpful conversations. We thank the reviewers for
helpful comments. 


\end{document}